\newtheorem{thm}{Theorem}[section]
\newtheorem{cor}[thm]{Corollary}
\newtheorem{lem}[thm]{Lemma}
\newtheorem{prop}[thm]{Proposition}
\theoremstyle{definition}
\newtheorem{exmpl}[thm]{Example}
\newtheorem{definition}[thm]{Definition}
\newtheorem{remark}[thm]{Remark}
\renewcommand{\epsilon}{\varepsilon}
\renewcommand{\phi}{\varphi}
\newcommand{\defeq}{\mathrel{\mathop:}=}
\DeclareMathOperator{\id}{id}
\DeclareMathOperator{\ar}{ar}
\DeclareMathOperator{\h}{ht}
\begin{document}


\title{Profinite algebras and affine boundedness}

\author{Friedrich Martin Schneider}
\author{Jens Zumbr\"agel}
\address{Institute of Algebra, TU Dresden, 01062 Dresden, Germany}

\date{\today}

\begin{abstract} We prove a characterization of profinite algebras, i.e., topological algebras that are isomorphic to a projective limit of finite discrete algebras.  In general profiniteness concerns both the topological and algebraic characteristics of a topological algebra, whereas for topological groups, rings, semigroups, and distributive lattices, profiniteness turns out to be a purely topological property as it is is equivalent to the underlying topological space being a Stone space.

Condensing the core idea of those classical results, we introduce the concept of affine boundedness for an arbitrary universal algebra and show that for an affinely bounded topological algebra over a compact signature profiniteness is equivalent to the underlying topological space being a Stone space.  Since groups, semigroups, rings, and distributive lattices are indeed affinely bounded algebras over finite signatures, all these known cases arise as special instances of our result. Furthermore, we present some additional applications concerning topological semirings and their modules, as well as distributive associative algebras. We also deduce that any affinely bounded simple compact algebra over a compact signature is either connected or finite. Towards proving the main result, we also establish that any topological algebra is profinite if and only if its underlying space is a Stone space and its translation monoid is equicontinuous. \end{abstract}

\maketitle




\section{Introduction}

A topological algebra is called profinite if it is representable as a projective limit of finite discrete algebras. Profiniteness is a property referring to the interplay between the topological and the algebraic structure of a topological algebra rather than a simple conjunction of topological and algebraic conditions. However, for topological groups profiniteness turns out to be a purely topological phenomenon as it is equivalent to the underlying topological space being a \emph{Stone space}, i.e., a totally disconnected compact Hausdorff space. This is due to a classical result by van Dantzig \cite{VanDantzig}. Moreover, the same happens to be true for topological rings due to Anzai \cite{anzai} (see also \cite{kaplansky-rings}) as well as for topological semigroups and distributive lattices according to Numakura \cite{Numakura57}. In fact, Anzai \cite{anzai} even proved that a topological ring is profinite if and only the underlying topological space is a compact Hausdorff space. Since fields constitute simple rings, this furthermore implies that the only compact Hausdorff topological fields are the finite discrete ones. To our knowledge, it has been an open question to classify those classes of topological algebras for which profiniteness is equivalent to the underlying space being a Stone space, cf.~\cite{banaschewski-pua,choe,bergman,day79,johnstone,clark_et_al}.

In the present paper we condense a common core idea from the proofs of the results mentioned above. In fact, we introduce the concept of \emph{affine boundedness} for an arbitrary abstract algebra (Definition~\ref{definition:polynomial.boundedness}), and show that for an affinely bounded topological algebra over a compact signature profiniteness is equivalent to the underlying topological space being a Stone space (Theorem~\ref{theorem:bounded.profinite.algebras}). Since groups, semigroups, rings, and distributive lattices are indeed affinely bounded algebras over finite signatures, all the results addressed above arise as special instances of Theorem~\ref{theorem:bounded.profinite.algebras}. As an additional application we obtain a corresponding characterization of profinite topological semirings and profinite modules over compact topological semirings. Moreover, we provide a new conceptual proof of a profiniteness result by Choe~\cite{choe} for distributive associative algebras. As another corollary of Theorem~\ref{theorem:bounded.profinite.algebras} we obtain a dichotomy for affinely bounded simple compact Hausdorff topological algebras over compact signatures -- they are either connected or finite (Corollary~\ref{corollary:dichotomy}). Along the way towards our main result, we show that a general topological algebra is profinite if and only if its underlying space is a Stone space and its translation monoid is equicontinuous (Theorem~\ref{theorem:first.main.theorem}), or, equivalently, relatively compact with respect to the compact-open topology (Corollary~\ref{corollary:first.main.theorem}).

We note that our paper is related to work of Clark, Davey, Freese, and Jackson~\cite{clark_et_al} (see also~\cite{clark_et_al_2}).  In this article, the authors consider algebras having finitely determined syntactic congruences (FDSC), which means that there is a finite set of terms that determines all congruences of the algebra, and they show that such an algebra is profinite if and only if its underlying topological space is a Stone space (\cite{clark_et_al}, Theorem~8.1).  In our approach, instead of considering congruences directly, we focus on the translation monoid of the algebra.  Indeed, our concept of affine boundedness is associated with the translation monoid, and the characterization of profiniteness in Theorem~\ref{theorem:first.main.theorem} is based on topological properties of the translation monoid.  Furthermore, our setup allows for compact signatures (rather than just finite ones) and therefore applies -- for instance -- to modules over compact topological semirings, whereas the concept of FDSC is not suited for this situation.

This article is organized as follows. In Section~\ref{section:polynomial.boundedness} we introduce and explore our concept of affine boundedness: after recalling some standard terminology from universal algebra, we give the definition of affine boundedness for a general algebra, investigate several examples, and deduce some useful consequences concerning the representation of an affinely bounded algebra's translation monoid. Based on that, in Section~\ref{section:polynomially.bounded.topological.algebras} we show that any affinely bounded compact topological algebra over a compact signature admits a compact translation monoid. The subsequent Section~\ref{section:profinite.topological.algebras} is devoted to studying profinite topological algebras. In the course of this, we establish the aforementioned characterization of profiniteness in terms of the translation monoid of a topological algebra. Utilizing the results of Section~\ref{section:polynomially.bounded.topological.algebras}, we conclude that an affinely bounded topological algebra over a compact signature is profinite if and only if its carrier space happens to be a Stone space. In Section~\ref{section:simple.topological.algebras} we deduce the above-mentioned topological dichotomy for affinely bounded simple compact algebras over compact signatures. For illustration purposes we present some examples of affinely unbounded topological algebras in Section~\ref{section:examples}. Finally, Section~\ref{section:applications} provides applications of our results to profinite topological semirings and profinite modules over compact topological semirings, as well as distributive associative profinite topological algebras in the sense of Choe~\cite{choe}.

\section{Affine boundedness}\label{section:polynomial.boundedness}

In this section we introduce the concept of \emph{affine boundedness} for general algebras. This property refers to the representation of an algebra's translation monoid in terms of linear polynomials. For a start, let us recall some basic terminology from universal algebra. More precisely, we shall agree on some notation concerning terms and polynomials. For background information we refer to \cite{burris}.

Throughout this section, let $\Omega = (\Omega_{n})_{n \in \mathbb{N}}$ be a \emph{signature}, i.e., a sequence of disjoint sets. Furthermore, let us fix a countably infinite set $X$ of variables. We denote by $T_{\Omega}(X)$ the set of \emph{$\Omega$-terms over $X$}, i.e., the smallest set $T$ subject to the following conditions: 
\begin{enumerate}
	\item $X \subseteq T$.
	\item $\omega t_{1} \ldots t_{n} \in T$ for all $n \in
	\mathbb{N}$, $\omega \in \Omega_{n}$ and $t_{1},\ldots,t_{n} \in T$.
\end{enumerate} Let us abbreviate $T_{\Omega}^{\times}(X) \defeq T_{\Omega}(X)\setminus T_{\Omega}(\varnothing)$. We define the \emph{height function} $\h \colon T_{\Omega}(X) \to \mathbb{N}$ in the following recursive manner: $\h(x) \defeq 0$ whenever $x \in X \cup \Omega_{0}$ and \begin{displaymath}
	\h(\omega t_{1} \ldots t_{n}) \defeq \sup \{ \h(t_{i}) \mid i \in \{ 1,\ldots,n \} \} + 1
\end{displaymath} for all $n \in \mathbb{N}\setminus \{ 0 \}$, $\omega \in \Omega_{n}$ and $t_{1},\ldots,t_{n} \in T_{\Omega}(X)$. Another recursive definition provides us with the \emph{arity} of a term, i.e., the map $\ar \colon T_{\Omega}(X) \to \mathbb{N}$ defined as follows: $\ar(x) \defeq 0$ whenever $x \in X$ and \begin{displaymath}
	\ar(\omega t_{1} \ldots t_{n}) \defeq \sup (\{ \ar(t_{i}) \mid i \in \{ 1,\ldots,n \} \} \cup \{ n \})
\end{displaymath} for all $n \in \mathbb{N}$, $\omega \in \Omega_{n}$ and $t_{1},\ldots,t_{n} \in T_{\Omega}(X)$.

In the following, we shall have a closer look at a very particular subclass of $T_{\Omega}(X)$. A term $t \in T_{\Omega}(X)$ is called \emph{linear} if each variable appearing in $t$ occurs exactly once in $t$. We denote by $L_{\Omega}(X)$ the set of all linear $\Omega$-terms over $X$. Note that $L_{\Omega}(\varnothing) = T_{\Omega}(\varnothing)$. Again, we abbreviate $L_{\Omega}^{\times}(X) \defeq L_{\Omega}(X)\setminus L_{\Omega}(\varnothing)$. Concerning a single variable $x$, we obtain a monoid by equipping $L_{\Omega}(x) \defeq L_{\Omega}(\{ x \})$ with the obvious concatenation along $x$, i.e., the operation ${\cdot} \colon L_{\Omega}(x) \times L_{\Omega}(x) \to L_{\Omega}(x)$ defined recursively as follows: if $t \in L_{\Omega}(x)$, then $x \cdot t \defeq t$, and $(\omega t_{1}\ldots t_{n}) \cdot t \defeq \omega (t_{1} \cdot t) \ldots (t_{n} \cdot t)$ for all $n \in \mathbb{N}$, $\omega \in \Omega_{n}$ and $t_{1},\ldots ,t_{n} \in L_{\Omega}(x)$. Note that $L_{\Omega}^{\times}(x) \defeq L_{\Omega}(x)\setminus L_{\Omega}(\varnothing)$ constitutes a submonoid of $L_{\Omega}(x)$.

In the course of this article, we shall also be concerned with polynomials. To recall this concept, let us consider a set $A$ such that $A \cap \Omega_{n} = \varnothing$ for each $n \in \mathbb{N}$. We define $\Omega + A$ to be the signature where $(\Omega + A)_{0} \defeq \Omega_{0} \cup A$ and $(\Omega + A)_{n} \defeq \Omega_{n}$ for all $n \geq 1$. The elements of $T_{\Omega + A}(X)$ are called \emph{$\Omega$-polynomials over $X$ and $A$}. In particular, the elements of $L_{\Omega + A}(x)$ are the linear $\Omega$-polynomials over~$A$, which we may also call {\em affine} $\Omega$-terms over~$A$.

Now we come to algebras. By an \emph{$\Omega$-algebra} or \emph{algebra of type $\Omega$} we mean a pair $\mathbf{A} = (A,E)$ consisting of a set $A$ and a family $E = (E_{n})_{n \in \mathbb{N}}$ of maps $E_{n} \colon \Omega_{n} \times A^{n} \to A$ ($n \in \mathbb{N}$). Suppose $\mathbf{A} = (A,E)$ to be an $\Omega$-algebra. For $n \in \mathbb{N}$ and $\omega \in \Omega_{n}$, we define $\omega^{\mathbf{A}} \colon A^{n} \to A, \, a \mapsto E_{n}(\omega,a)$. A \emph{translation} of $\mathbf{A}$ is a map of the form \begin{displaymath}
A \to A, \quad x 
\mapsto E_{n}(\omega,a_{1},\ldots,a_{i-1},x,a_{i+1},\ldots,a_{n})
\end{displaymath} where $n \in \mathbb{N} \setminus \{ 0 \}$, $\omega \in \Omega_{n}$, $i \in \{1,\ldots,n\}$ and $a_{1},\ldots,a_{i-1},a_{i+1},\ldots,a_{n} \in A$. We denote by $M(\mathbf{A})$ the \emph{translation monoid} of~$\mathbf{A}$, i.e., the transformation monoid generated by the translations of $\mathbf{A}$.

Let us recall the following well-known fact:

\begin{lem}\label{lemma:congruences} Let $\mathbf{A} = (A,E)$ be an algebra of type $\Omega$. An equivalence relation $\theta$ on $A$ is a congruence of $\mathbf{A}$ if and only if $(f(a),f(b)) \in \theta$ for all $(a,b) \in \theta$ and $f \in M(\mathbf{A})$. \end{lem}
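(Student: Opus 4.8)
The plan is to prove both implications directly from the definitions, using the standard structural-induction machinery on terms.

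For the forward direction, suppose $\theta$ is a congruence. By definition this means each basic operation $\omega^{\mathbf{A}}$ is compatible with $\theta$, i.e., if $(a_i,b_i)\in\theta$ for $i=1,\dots,n$ then $(\omega^{\mathbf{A}}(a_1,\dots,a_n),\omega^{\mathbf{A}}(b_1,\dots,b_n))\in\theta$. Every translation is obtained from a single $\omega^{\mathbf{A}}$ by fixing all but one coordinate, so if $(a,b)\in\theta$, then feeding $a$ and $b$ into the free coordinate and the (equal, hence $\theta$-related) fixed entries into the others yields $(f(a),f(b))\in\theta$ for each generating translation $f$. Since $\theta$ is transitive, this compatibility is preserved under composition, so it holds for every element of the monoid $M(\mathbf{A})$ generated by the translations. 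Thus I would first check the claim for generators, then extend to arbitrary $f\in M(\mathbf{A})$ by induction on word length, the inductive step being immediate from transitivity of $\theta$.

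For the converse, assume $(f(a),f(b))\in\theta$ for all $f\in M(\mathbf{A})$ and all $(a,b)\in\theta$; I must show $\theta$ is a congruence, i.e., compatible with every $\omega^{\mathbf{A}}$, $\omega\in\Omega_n$. Given $(a_i,b_i)\in\theta$ for $i=1,\dots,n$, I would interpolate: form the chain of elements $c_k \defeq \omega^{\mathbf{A}}(b_1,\dots,b_k,a_{k+1},\dots,a_n)$ for $k=0,\dots,n$, so $c_0 = \omega^{\mathbf{A}}(a_1,\dots,a_n)$ and $c_n = \omega^{\mathbf{A}}(b_1,\dots,b_n)$. Consecutive terms $c_{k-1}$ and $c_k$ differ only in the $k$-th argument, where $a_k$ is replaced by $b_k$; applying the hypothesis to the translation $f$ that fixes coordinates $1,\dots,k-1$ at $b_1,\dots,b_{k-1}$ and coordinates $k+1,\dots,n$ at $a_{k+1},\dots,a_n$, and to the pair $(a_k,b_k)\in\theta$, gives $(c_{k-1},c_k)\in\theta$. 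Chaining these $n$ relations by transitivity yields $(c_0,c_n)\in\theta$, which is exactly the required compatibility. (If $n=0$ there is nothing to prove since a nullary operation is a constant.)

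There is no real obstacle here — both directions are routine once the right interpolation chain is written down — but the one point deserving care is the bookkeeping in the converse: one must ensure that at each step the map obtained by fixing all but the $k$-th coordinate is genuinely a translation in the sense of the definition (it is, by inspection), and that the fixed entries used are the already-updated $b$'s on one side and the not-yet-updated $a$'s on the other, so that consecutive $c_k$'s really do differ in a single coordinate. The rest is transitivity of $\theta$ and, in the forward direction, a trivial induction on the length of a word in the generators of $M(\mathbf{A})$.
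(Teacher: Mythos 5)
The paper states this lemma without proof, as a well-known fact, so there is nothing to compare against; your argument is the standard one and is correct: for the forward direction, reflexivity of $\theta$ handles the fixed coordinates of a translation and an induction on word length extends compatibility from the generating translations to all of $M(\mathbf{A})$, while the converse is the usual interpolation chain $c_{k} = \omega^{\mathbf{A}}(b_{1},\ldots,b_{k},a_{k+1},\ldots,a_{n})$ chained by transitivity. One tiny correction: in the forward direction the inductive step for a composition $f_{k}\circ\cdots\circ f_{1}$ does not use transitivity at all, only the repeated application of the generator case to the already $\theta$-related pair $(f_{k-1}\cdots f_{1}(a),\,f_{k-1}\cdots f_{1}(b))$.
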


Since the congruences of an algebra are precisely its translation invariant equivalence relations, we aim at a feasible description of the corresponding translation monoid. To this end, we define a map $\psi_{\mathbf{A}} \colon T_{\Omega + A}(x) \times A \to A$ recursively as follows: if $z \in A$, then $\psi_{\mathbf{A}}(x,z) \defeq z$ and $\psi_{\mathbf{A}}(a,z) \defeq a$ for $a \in A$, as well as \begin{displaymath}
	\psi_{\mathbf{A}}(\omega t_{1}\ldots t_{n},z) \defeq E_{n}(\omega,\psi_{\mathbf{A}}(t_{1},z), \ldots, \psi_{\mathbf{A}}(t_{n},z))
\end{displaymath} for $n \in \mathbb{N}$, $\omega \in \Omega_{n}$ and $t_{1},\ldots,t_{n} \in T_{\Omega}(X)$. Furthermore, we shall consider the map $\Psi_{\mathbf{A}} \colon T_{\Omega + A}(x) \to A^{A}$ defined by $\Psi_{\mathbf{A}}(t)(z) = \psi_{\mathbf{A}}(t,z)$ for all $t \in T_{\Omega + A}(x)$ and $z \in A$.

A straightforward term induction reveals the subsequent result:

\begin{prop} If $\mathbf{A} = (A,E)$ is an $\Omega$-algebra, then $M(\mathbf{A}) = \{ \Psi_{\mathbf{A}}(t) \mid t \in L_{\Omega + A}^{\times}(x) \}$. \end{prop}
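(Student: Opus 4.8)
The plan is to prove the two inclusions separately, both by induction on the structure of linear polynomials. For the inclusion $\{\Psi_{\mathbf A}(t) \mid t \in L_{\Omega+A}^{\times}(x)\} \subseteq M(\mathbf A)$, I would first observe that $M(\mathbf A)$ contains the identity map $\Psi_{\mathbf A}(x)$ (since the translation monoid is a \emph{monoid} of transformations) and, by definition, every translation; I would then argue that every $\Psi_{\mathbf A}(t)$ with $t \in L_{\Omega+A}^{\times}(x)$ is a composite of such maps. The key structural point is that if $t = \omega t_{1}\ldots t_{n}$ is linear and contains the variable $x$, then $x$ occurs in exactly one of the $t_{i}$, say in $t_{i_{0}}$, while the other $t_{j}$ lie in $T_{\Omega+A}(\varnothing)$ and hence $\psi_{\mathbf A}(t_{j},z)$ is a constant $a_{j} \in A$ independent of $z$. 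Consequently $\Psi_{\mathbf A}(t)(z) = E_{n}(\omega, a_{1},\ldots, \psi_{\mathbf A}(t_{i_{0}},z),\ldots, a_{n})$, which exhibits $\Psi_{\mathbf A}(t)$ as the composition of the translation $y \mapsto E_{n}(\omega,a_{1},\ldots,y,\ldots,a_{n})$ (in the $i_{0}$-th slot) with $\Psi_{\mathbf A}(t_{i_{0}})$. Since $\h(t_{i_{0}}) < \h(t)$ (or $t_{i_{0}} = x$ and $\Psi_{\mathbf A}(t_{i_{0}}) = \id_{A} \in M(\mathbf A)$), an induction on $\h(t)$ closes this direction. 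A small subtlety to record is the base case $t \in L_{\Omega+A}^{\times}(x)$ with $\h(t)=0$: then $t = x$ and $\Psi_{\mathbf A}(x) = \id_{A}$.

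For the reverse inclusion $M(\mathbf A) \subseteq \{\Psi_{\mathbf A}(t) \mid t \in L_{\Omega+A}^{\times}(x)\}$, I would show that the right-hand side is a submonoid of $A^{A}$ containing all translations, which suffices since $M(\mathbf A)$ is the submonoid \emph{generated} by the translations. It contains $\id_{A} = \Psi_{\mathbf A}(x)$ with $x \in L_{\Omega+A}^{\times}(x)$, and it contains every translation $y \mapsto E_{n}(\omega,a_{1},\ldots,a_{i-1},y,a_{i+1},\ldots,a_{n})$, which equals $\Psi_{\mathbf A}(\omega a_{1}\ldots a_{i-1}x\,a_{i+1}\ldots a_{n})$, an affine $\Omega$-term over $A$ genuinely containing $x$. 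Closure under composition rests on the identity $\Psi_{\mathbf A}(s \cdot t) = \Psi_{\mathbf A}(s) \circ \Psi_{\mathbf A}(t)$ for $s,t \in L_{\Omega+A}^{\times}(x)$, together with the fact that $L_{\Omega+A}^{\times}(x)$ is closed under the concatenation $\cdot$ (noted in the excerpt as a submonoid of $L_{\Omega+A}(x)$). The composition identity itself is a routine term induction on the height of $s$, using the recursive definitions of $\cdot$ and of $\psi_{\mathbf A}$; the base case $s = x$ gives $\Psi_{\mathbf A}(x \cdot t) = \Psi_{\mathbf A}(t) = \id_{A} \circ \Psi_{\mathbf A}(t)$, and the inductive step distributes $\cdot t$ over $\omega s_{1}\ldots s_{n}$ and applies the induction hypothesis coordinatewise.

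I do not expect a serious obstacle here: both directions are ``straightforward term inductions'' as the excerpt advertises. The one point that needs genuine care is the bookkeeping around linearity in the first inclusion — namely that linearity of $\omega t_{1}\ldots t_{n}$ forces all but one immediate subterm to be variable-free (equivalently, to lie in $L_{\Omega+A}(\varnothing) = T_{\Omega+A}(\varnothing)$), so that their $\psi_{\mathbf A}$-values are honest constants and the displayed factorization as (translation) $\circ$ (lower-height map) is legitimate. Establishing the composition identity $\Psi_{\mathbf A}(s\cdot t) = \Psi_{\mathbf A}(s)\circ\Psi_{\mathbf A}(t)$ is the other place where one must be slightly attentive, but it is purely mechanical once the recursions are unwound.
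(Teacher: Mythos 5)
Your proposal is correct and amounts to exactly the ``straightforward term induction'' the paper invokes without spelling out: one inclusion by induction on height, using linearity to factor $\Psi_{\mathbf{A}}(\omega t_{1}\ldots t_{n})$ as a translation composed with a lower-height map, and the other by noting that the image of $L_{\Omega+A}^{\times}(x)$ under $\Psi_{\mathbf{A}}$ is a submonoid containing all translations, via the homomorphism identity $\Psi_{\mathbf{A}}(s\cdot t)=\Psi_{\mathbf{A}}(s)\circ\Psi_{\mathbf{A}}(t)$ that the paper itself records immediately after the proposition. No gaps; the linearity bookkeeping you flag is indeed the only point requiring care, and you handle it correctly.
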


Note that $\Psi_{\mathbf{A}} \colon (L_{\Omega+A}^{\times}(x), \cdot) \to (M(\mathbf{A}), \circ)$ is a surjective monoid homomorphism.
Now we introduce and explain the aforementioned concept of boundedness.

\begin{definition}\label{definition:polynomial.boundedness} Let $\mathbf{A} = (A,E)$ be an algebra of type $\Omega$. We say that $\mathbf{A}$
is \emph{affinely bounded by $m \in \mathbb{N}$} if \begin{displaymath}
	M(\mathbf{A}) = \{ \Psi_{\mathbf{A}}(t) \mid t \in L_{\Omega + A}^{\times}(x), \, \ar(t) \leq m, \, \h(t) \leq m \} .
\end{displaymath} We call $\mathbf{A}$ \emph{affinely bounded} if $\mathbf{A}$ is affinely bounded by some $m \in \mathbb{N}$. \end{definition}

\begin{remark}\label{remark:polynomial.boundedness} Let $\mathbf{A} = (A,E)$ be an $\Omega$-algebra and let $m \in \mathbb{N}$. The following are equivalent: \begin{enumerate}
	\item $\mathbf{A}$ is affinely bounded by $m$.
	\item $\forall t \in L_{\Omega + A}(x) \, \exists t' \in L_{\Omega + A}(x) \colon \, \Psi_{\mathbf{A}}(t) = \Psi_{\mathbf{A}} (t'), \, \ar(t') \leq m, \, \h(t') \leq m$.
\end{enumerate} Moreover, if $\sup \{ n \in \mathbb{N} \mid \Omega_{n} \ne \varnothing \} \leq m$, then the following are also equivalent to (1): \begin{enumerate}
	\item[(3)] $M(\mathbf{A}) = \{ \Psi_{\mathbf{A}}(t) \mid t \in L_{\Omega + A}^{\times}(x), \, \h(t) \leq m \}$.
	\item[(4)] $\forall t \in L_{\Omega + A}(x) \, \exists t' \in L_{\Omega + A}(x) \colon \, \Psi_{\mathbf{A}}(t) = \Psi_{\mathbf{A}} (t'), \, \h(t') \leq m$.
\end{enumerate} \end{remark}

\begin{exmpl}\label{example:polynomial.boundedness} Let $\mathbf{G} = (G, \cdot)$ be a semigroup, i.e., $\mathbf{G}$ is an $\Omega$-algebra, where $\Omega_2 = \{ \cdot \}$ and $\Omega_i = \varnothing$ for $i \ne 2$, and the binary operation~$\cdot$ is associative.  Then the $\Omega$-algebra $\mathbf{G}$ is affinely bounded by~$2$, that is, for each $t \in L_{\Omega + G}(x)$ there exists $t' \in L_{\Omega + G}(x)$ with $\Psi_{\mathbf{G}}(t) = \Psi_{\mathbf{G}}(t')$ and $\h(t') \le 2$.  Indeed, due to associativity we can choose $t' = (a \cdot x) \cdot b$ for some $a, b \in G$, or $t' = a \cdot x$ or $t' = x \cdot a$ for some $a \in G$, or $t' = x$.

Similarly, any monoid $\mathbf{G} = (G, \cdot, 1)$ is affinely bounded (by~$2$), as the binary operation~$\cdot$ is associative.  If $\mathbf{G} = (G, \cdot, 1, {}^{-1})$ is a group then the algebra~$\mathbf{G}$ is likewise affinely bounded, in fact, for each $t \in L_{\Omega + G}(x)$ there exists $t' \in L_{\Omega + G}(x)$ with $\h(t') \le 3$ and $\Psi_{\mathbf{G}}(t) = \Psi_{\mathbf{G}}(t')$, namely $t' = (a \cdot x) \cdot b$ or $t' = (a \cdot x^{-1}) \cdot b$, for some $a, b \in G$.

On the other hand, the $\Omega$-algebra $\mathbf{G} = (G, \cdot)$ is in general not affinely bounded if the binary operation~$\cdot$ is non-associative.  For example, let $\mathbf{G}$ be the free groupoid over one element~$a$, and consider the sequence $(t_i)_{i \in \mathbb{N}}$ recursively defined by $t_0 := x$ and $t_i := t_{i-1} \cdot a$ for $i \ge 1$ (e.g., $t_3 = ((x \cdot a) \cdot a) \cdot a$), then for each $i \in \mathbb{N}$ there is no $t' \in L_{\Omega + G}(x)$ with $\Psi_{\mathbf{G}}(t_i) = \Psi_{\mathbf{G}}(t')$ and $\h(t') < i$. \end{exmpl}

Affine boundedness offers a very convenient way of describing an algebra's translation monoid (see Lemma~\ref{lemma:bounded.translation.monoid}), which shall turn out useful in Section~\ref{section:polynomially.bounded.topological.algebras}. Towards this aim, we need to address some technical matters. We consider the signature $\Sigma \defeq (\Sigma_{n})_{n \in \mathbb{N}}$ where $\Sigma_{0} \defeq \{ 0, \ast \}$ and $\Sigma_{n} \defeq \{ n \}$ for every $n \geq 1$. Let $\mathbf{A}$ be an algebra of type $\Omega$. For each $t \in T_{\Sigma}(x)$, we define a set $S(t)$ and a map $\phi_{\mathbf{A}}(t)\colon S(t) \times A \to A$ recursively as follows. Let $S(x) \defeq \{ \varnothing \}$ and $S(\ast) \defeq A$, as well as \begin{displaymath} 
	S (n t_{1} \ldots t_{n}) \defeq \Omega_{n} \times S(t_{1}) \times \ldots \times S(t_{n})
\end{displaymath} for $n \in \mathbb{N}$ and $t_{1},\ldots,t_{n} \in T_{\Sigma}(x)$. Besides, let $\phi_{\mathbf{A}}(x) \colon \{ \varnothing \} \times A \to A , \, (\varnothing,z) \mapsto z$ and $\phi_{\mathbf{A}}(\ast ) \colon A \times A \to A, \, (a,z) \mapsto a$, as well as \begin{multline*} 
	\quad \phi_{\mathbf{A}}(n t_{1}\ldots t_{n}) \colon \,   
	\Omega_{n} \times S(t_{1}) \times \ldots \times S(t_{n}) \times A \, 
	\longrightarrow \, A, \\
	(\omega,s_{1},\ldots,s_{n},z) \, \longmapsto \, 
	E_{n}(\omega,\phi_{\mathbf{A}}(t_{1})(s_{1},z), \ldots,
	\phi_{\mathbf{A}}(t_{n})(s_{n},z)) \quad
\end{multline*} for all $n \in \mathbb{N}$ and $t_{1},\ldots,t_{n} \in T_{\Sigma}(x)$.  Let $D_{\Omega}(A) \defeq \bigcup \{ \{ t \} \times S(t) \mid t \in T_{\Sigma}(x) \}$ and define $\Phi_{\mathbf{A}} \colon D_{\Omega}(A) \to A^A$ by $\Phi_{\mathbf{A}}(t,s)(z) = \phi_{\mathbf{A}}(t)(s,z)$ for all $(t,s) \in D_{\Omega}(A)$ and $z \in A$. Now the following turns out to be true:

\begin{lem}\label{lemma:bounded.translation.monoid} If $\mathbf{A} = (A,E)$ is an $\Omega$-algebra being affinely bounded by $m \in \mathbb{N}$, then \begin{displaymath}
	M(\mathbf{A}) = \{ \Phi_{\mathbf{A}}(t,s) \mid t \in L_{\Sigma}^{\times}(x), \, \ar(t) \leq m, \, \h(t) \leq m, \, s \in S(t) \} .
\end{displaymath} \end{lem}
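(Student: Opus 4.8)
The plan is to set up an explicit dictionary between the pairs $(t,s) \in D_{\Omega}(A)$ with $t$ linear and the linear $\Omega$-polynomials over $A$, under which $\Phi_{\mathbf{A}}$ corresponds to $\Psi_{\mathbf{A}}$ and which preserves both height and arity; once this is in place, the lemma follows at once from Definition~\ref{definition:polynomial.boundedness}. Concretely, I would define a map $\Theta \colon D_{\Omega}(A) \to T_{\Omega + A}(x)$ by recursion along the $\Sigma$-term, putting $\Theta(x,\varnothing) \defeq x$, $\Theta(\ast, a) \defeq a$ for $a \in A$ (regarding $a$ as a nullary symbol of $\Omega + A$), and $\Theta(n\,t_{1}\ldots t_{n}, (\omega, s_{1}, \ldots, s_{n})) \defeq \omega\, \Theta(t_{1}, s_{1}) \ldots \Theta(t_{n}, s_{n})$ for $n \in \mathbb{N}$, $\omega \in \Omega_{n}$, and $(t_{i}, s_{i})$ as dictated by the definition of $S$; in particular $\Theta(0, \omega) = \omega$ for $\omega \in \Omega_{0}$. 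The standing assumption $A \cap \Omega_{n} = \varnothing$ ensures that the two kinds of nullary leaves (the genuine parameters marked by $\ast$ and the nullary operations marked by $0$) never get confused, so that this recursion is well defined against the recursive shape of $S(t)$.

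I would then record three facts, each proved by a straightforward structural induction on $t \in T_{\Sigma}(x)$. First, $\psi_{\mathbf{A}}(\Theta(t,s), z) = \phi_{\mathbf{A}}(t)(s, z)$ for all $(t,s) \in D_{\Omega}(A)$ and $z \in A$, that is, $\Psi_{\mathbf{A}} \circ \Theta = \Phi_{\mathbf{A}}$; this is immediate upon comparing the defining recursions of $\psi_{\mathbf{A}}$ and $\phi_{\mathbf{A}}$. Second, $\h(\Theta(t,s)) = \h(t)$ and $\ar(\Theta(t,s)) = \ar(t)$: indeed $\Theta$ leaves the underlying tree unchanged, the $\Sigma$-symbol $n$ and any $\omega \in \Omega_{n}$ have the same arity $n$, and the leaves $x$, $\ast$, $0$ together with their images $x$, $a \in A$, $\omega \in \Omega_{0}$ are all of height $0$ and arity $0$. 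Third, $\Theta$ restricts to a bijection from $\{ (t,s) \mid t \in L_{\Sigma}^{\times}(x),\, s \in S(t) \}$ onto $L_{\Omega + A}^{\times}(x)$: since $\Theta$ neither duplicates nor erases the single variable $x$, it carries linear non-constant terms to linear non-constant terms, and conversely any $t' \in L_{\Omega + A}^{\times}(x)$ is recovered by relabelling each $n$-ary internal node of $t'$ (so $n \geq 1$) by the $\Sigma$-symbol $n$, each leaf from $\Omega_{0}$ by $0$, each leaf from $A$ by $\ast$, keeping $x$, and gathering the erased labels into the matching $s \in S(t)$.

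It remains to assemble these observations. By affine boundedness, $M(\mathbf{A}) = \{ \Psi_{\mathbf{A}}(t') \mid t' \in L_{\Omega + A}^{\times}(x),\, \ar(t') \leq m,\, \h(t') \leq m \}$; writing each such $t'$ as $\Theta(t,s)$ via the third fact and applying the first two facts converts this description verbatim into $\{ \Phi_{\mathbf{A}}(t,s) \mid t \in L_{\Sigma}^{\times}(x),\, \ar(t) \leq m,\, \h(t) \leq m,\, s \in S(t) \}$, which yields one inclusion. For the reverse inclusion one notes that whenever $t \in L_{\Sigma}^{\times}(x)$, $s \in S(t)$ and $\ar(t), \h(t) \leq m$, the second and third facts give $\Theta(t,s) \in L_{\Omega + A}^{\times}(x)$ with $\ar(\Theta(t,s)), \h(\Theta(t,s)) \leq m$, whence $\Phi_{\mathbf{A}}(t,s) = \Psi_{\mathbf{A}}(\Theta(t,s)) \in M(\mathbf{A})$ again by Definition~\ref{definition:polynomial.boundedness} (or simply by the preceding Proposition). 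I do not anticipate any genuine difficulty here: the argument is pure bookkeeping, and the only points that require a little care are the well-definedness of $\Theta$ relative to the recursive structure of $S(t)$, the clean separation of the two sorts of nullary symbols, and checking that linearity, height and arity all transfer along $\Theta$ without loss.
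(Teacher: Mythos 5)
Your proposal is correct and follows essentially the same route as the paper: your map $\Theta$ is exactly the paper's $\mu_{A}\colon D_{\Omega}(A) \to T_{\Omega+A}(x)$, your relabelling inverse is the paper's $\lambda_{A} = (\rho_{A},\sigma_{A})$, and the three facts you record (compatibility $\Psi_{\mathbf{A}}\circ\Theta = \Phi_{\mathbf{A}}$, preservation of height and arity, and the bijective correspondence on linear terms) are precisely the observations the paper uses to deduce the lemma from Definition~\ref{definition:polynomial.boundedness}.
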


Towards proving Lemma~\ref{lemma:bounded.translation.monoid}, we establish mutually inverse maps $\mu_{A} \colon D_{\Omega}(A) \to T_{\Omega + A}(x)$ and $\lambda_{A} \colon T_{\Omega +  A}(x) \to D_{\Omega}(A)$ such that $\Phi_{\mathbf{A}} \circ \lambda_{A} = \Psi_{\mathbf{A}}$. We define $\mu_{A} \colon D_{\Omega}(A) \to T_{\Omega + A}(x)$ by recursion: $\mu_{A}(x,\varnothing) \defeq x$ and $\mu_{A}(\ast, a) \defeq a$ for $a \in A$, as well as 
\begin{displaymath}
	\mu_{A}(n t_{1}\ldots t_{n}, (\omega ,s_{1},\ldots,s_{n})) \defeq \omega \mu_{A}(t_{1},s_{1}) \ldots \mu_{A}(t_{n},s_{n})
\end{displaymath} 
whenever $n \in \mathbb{N}$, $t_{1},\ldots,t_{n} \in T_{\Sigma}(x)$ and $(\omega ,s_{1},\ldots,s_{n}) \in S(n t_{1}\ldots t_{n})$. Conversely, we first define a map $\rho_{A} \colon T_{\Omega + A}(x) \to T_{\Sigma}(x)$ as follows: $\rho_{A}(x) \defeq x$ and $\rho_{A}(a) \defeq \ast$ for $a \in A$, and 
	$\rho_{A}(\omega t_{1} \ldots t_{n}) \defeq n\rho_{A}(t_{1})\ldots \rho_{A}(t_{n})$
for $n \in \mathbb{N}$, $\omega \in \Omega_{n}$ and $t_{1},\ldots ,t_{n} \in T_{\Omega + A}(x)$. Furthermore, for each $t \in T_{\Omega + A}(x)$, we define $\sigma_{A}(t) \in S(\rho_{A}(t))$ by the following recursion: $\sigma_{A}(x) \defeq \varnothing$ and $\sigma_{A}(a) \defeq a$ for $a \in A$, and 
	$\sigma_{A}(\omega t_{1}\ldots t_{n}) \defeq (\omega,\sigma_{A}(t_{1}),\ldots ,\sigma_{A}(t_{n}))$
whenever $n \in \mathbb{N}$, $\omega \in \Omega_{n}$ and $t_{1},\ldots,t_{n} \in T_{\Omega + A}(x)$. Finally, we define $\lambda_{A} \colon T_{\Omega + A}(x) \to D_{\Omega}(A)$, $t \mapsto (\rho_{A}(t),\sigma_{A}(t))$. An elementary term induction provides us with the following observations, which altogether readily imply Lemma~\ref{lemma:bounded.translation.monoid}.

\begin{lem} If $\mathbf{A} = (A,E)$ is an algebra of type $\Omega$, then the following hold: \begin{enumerate}
	\item ${\mu_{A}} \circ {\lambda_{A}} = \id_{T_{\Omega + A}(x)}$ and ${\lambda_{A}} \circ {\mu_{A}} = \id_{D_{\Omega}(A)}$.
	\item $\Phi_{\mathbf{A}} \circ \lambda_{A} = \Psi_{\mathbf{A}}$ and hence $\Psi_{\mathbf{A}} \circ \mu_{A} = \Phi_{\mathbf{A}}$.
	\item $\h \circ {\rho_{A}} = \h$ and $\ar \circ {\rho_{A}} = \ar$.
\end{enumerate} \end{lem}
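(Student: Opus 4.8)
The plan is to verify all three items by straightforward structural induction on $\Omega$-polynomials, that is, on terms in $T_{\Omega + A}(x)$ --- the sole exception being the second identity in~(1), which is more naturally run by induction on $T_{\Sigma}(x)$. As a preliminary step I would record the (so far only implicit) well-definedness claim that $\sigma_{A}(t) \in S(\rho_{A}(t))$ for every $t \in T_{\Omega + A}(x)$, so that $\lambda_{A}$ genuinely takes values in $D_{\Omega}(A)$; this is an immediate term induction, since at an inner node $\omega t_{1}\ldots t_{n}$ with $\omega \in \Omega_{n}$ the tuple $(\omega,\sigma_{A}(t_{1}),\ldots,\sigma_{A}(t_{n}))$ lies in $\Omega_{n} \times S(\rho_{A}(t_{1})) \times \ldots \times S(\rho_{A}(t_{n})) = S(\rho_{A}(\omega t_{1}\ldots t_{n}))$ by the inductive hypothesis.

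For~(1), I would first prove $\mu_{A}(\lambda_{A}(t)) = t$ for all $t \in T_{\Omega + A}(x)$ by induction on $t$. The base cases are $t = x$, where $\lambda_{A}(x) = (x,\varnothing)$ and $\mu_{A}(x,\varnothing) = x$, and $t = a$ with $a \in A$, where $\lambda_{A}(a) = (\ast,a)$ and $\mu_{A}(\ast,a) = a$. In the inductive step $t = \omega t_{1}\ldots t_{n}$ with $\omega \in \Omega_{n}$ (the constants $\omega \in \Omega_{0}$ being simply the instance $n = 0$), one unfolds $\lambda_{A}(t) = (n\,\rho_{A}(t_{1})\ldots\rho_{A}(t_{n}),\,(\omega,\sigma_{A}(t_{1}),\ldots,\sigma_{A}(t_{n})))$ and observes that $\mu_{A}$ applied to this pair equals $\omega\,\mu_{A}(\rho_{A}(t_{1}),\sigma_{A}(t_{1}))\ldots\mu_{A}(\rho_{A}(t_{n}),\sigma_{A}(t_{n}))$, which is $\omega t_{1}\ldots t_{n}$ by the induction hypothesis applied to each $t_{i}$. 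The reverse identity $\lambda_{A}(\mu_{A}(t,s)) = (t,s)$ for $(t,s) \in D_{\Omega}(A)$ is proved symmetrically by induction on $t \in T_{\Sigma}(x)$, conveniently split into the two claims $\rho_{A}(\mu_{A}(t,s)) = t$ and $\sigma_{A}(\mu_{A}(t,s)) = s$; the base cases are $(x,\varnothing)$ and $(\ast,a)$, and the inductive step $t = n t_{1}\ldots t_{n}$, $s = (\omega,s_{1},\ldots,s_{n})$ again just unfolds the definitions and invokes the hypothesis on the pairs $(t_{i},s_{i})$.

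Item~(2) is handled in the same spirit. The identity $\Phi_{\mathbf{A}}(\lambda_{A}(t))(z) = \Psi_{\mathbf{A}}(t)(z)$, i.e., $\phi_{\mathbf{A}}(\rho_{A}(t))(\sigma_{A}(t),z) = \psi_{\mathbf{A}}(t,z)$ for all $t \in T_{\Omega + A}(x)$ and $z \in A$, is an induction on $t$: the leaves yield $\phi_{\mathbf{A}}(x)(\varnothing,z) = z = \psi_{\mathbf{A}}(x,z)$ and $\phi_{\mathbf{A}}(\ast)(a,z) = a = \psi_{\mathbf{A}}(a,z)$, while for $t = \omega t_{1}\ldots t_{n}$ both sides reduce, through the defining clauses of $\phi_{\mathbf{A}}$ and $\psi_{\mathbf{A}}$ and the operation $E_{n}$, to $E_{n}(\omega,\psi_{\mathbf{A}}(t_{1},z),\ldots,\psi_{\mathbf{A}}(t_{n},z))$ once the induction hypothesis is applied. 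The remaining assertion $\Psi_{\mathbf{A}} \circ \mu_{A} = \Phi_{\mathbf{A}}$ is then purely formal: by~(1) the map $\lambda_{A}$ is a bijection with inverse $\mu_{A}$, whence $\Phi_{\mathbf{A}} = \Phi_{\mathbf{A}} \circ \lambda_{A} \circ \mu_{A} = \Psi_{\mathbf{A}} \circ \mu_{A}$.

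Finally, item~(3) is the most mechanical. Both $\h(\rho_{A}(t)) = \h(t)$ and $\ar(\rho_{A}(t)) = \ar(t)$ follow by induction on $t \in T_{\Omega + A}(x)$: the key point is that $\rho_{A}$ preserves the arity of the root symbol (an $\omega \in \Omega_{n}$ at the root is replaced by $n \in \Sigma_{n}$, an adjoined constant $a$ by $\ast \in \Sigma_{0}$, and the variable $x$ is kept) as well as the branching shape of the term, so that the recursive defining equations for $\h$ and $\ar$ transfer verbatim. I do not anticipate a genuine obstacle anywhere in this proof; the only places requiring a modicum of care are bookkeeping ones --- keeping the three kinds of leaf apart (nullary symbols from $\Omega_{0}$, adjoined constants from $A \subseteq (\Omega+A)_{0}$, and the variable $x$), correctly treating $n = 0$ as an instance of the inner-node clause, and making sure the two directions of~(1) are each carried out over the correct term algebra, namely $T_{\Omega + A}(x)$ in one direction and $T_{\Sigma}(x)$ --- fibred over the sets $S(t)$ --- in the other.
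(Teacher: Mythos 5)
Your proof is correct and follows exactly the route the paper intends: the paper merely asserts that ``an elementary term induction'' yields all three items, and your structural inductions on $T_{\Omega+A}(x)$ (respectively on $T_{\Sigma}(x)$ for the second identity in~(1)), together with deducing $\Psi_{\mathbf{A}} \circ \mu_{A} = \Phi_{\mathbf{A}}$ formally from the bijectivity established in~(1), are precisely that argument. Your bookkeeping remarks (treating $\omega \in \Omega_{0}$ as the $n=0$ instance of the inner-node clause, and checking $\sigma_{A}(t) \in S(\rho_{A}(t))$) are exactly the points the paper leaves implicit.
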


\section{Affinely bounded topological algebras}\label{section:polynomially.bounded.topological.algebras}

Affine boundedness allows transferring compactness properties of a topological algebra to the respective translation monoid (see Proposition~\ref{proposition:second.main.theorem}). In order to explain this in detail, we need to endow the translation monoid with a suitable topology and then substantiate that the construction of Lemma~\ref{lemma:bounded.translation.monoid} is compatible with the chosen topological setting. 

For this purpose, we first recall some additional topological terminology. Let~$X$ and~$Y$ be topological spaces. We endow the set $C(X,Y)$ of all continuous maps from~$X$ to~$Y$ with the \emph{compact-open topology}, i.e., the topology on $C(X,Y)$ generated by the subbase 
	$\{ [K,U] \mid K \subseteq X \text{ compact}, \, U \subseteq Y \text{ open} \}$,
where $[K,U] \defeq \{ f \in C(X,Y) \mid f(K) \subseteq U \}$ for any compact $K \subseteq X$ and open $U \subseteq Y$. It is easy to see that if $Z$ is another topological space and $f \colon Z \times X \to Y$ is a continuous map, then the map $F \colon Z \to C(X,Y)$ given by \begin{displaymath}
	F(z)(x) \defeq f(z,x) \qquad (z \in Z, \, x \in X)
\end{displaymath} is continuous with respect to the compact-open topology on $C(X,Y)$.

Now we come to topological algebras. Let $\Omega = (\Omega_{n})_{n \in \mathbb{N}}$ be a \emph{continuous signature}, i.e., a sequence of disjoint topological spaces. This subsumes the ordinary notion of signature, since one can regard the latter as a continuous signature in which $\Omega_n$ is discrete for every $n \in \mathbb{N}$. We call $\Omega$ \emph{compact} if $\Omega_{n}$ is compact for every $n \in \mathbb{N}$. A \emph{topological $\Omega$-algebra} or \emph{topological algebra of type $\Omega$} is a pair $\mathbf{A} = (A,E)$ consisting of a topological space~$A$ and a family $E = (E_{n})_{n \in \mathbb{N}}$ of continuous maps $E_{n} \colon \Omega_{n} \times A^{n} \to A$ ($n \in \mathbb{N}$). We adopt all the concepts introduced in Section~\ref{section:polynomial.boundedness} for topological algebras in an obvious manner by referring to the underlying algebra. In particular, note that if $\mathbf{A} = (A,E)$ is a topological $\Omega$-algebra, then $M(\mathbf{A})$ is a subset of~$C(A,A)$. Furthermore, if $A$ is a topological space such that $A \cap \Omega_{n} = \varnothing$ for all $n \in \mathbb{N}$, we obtain a continuous signature $\Omega + A$ by putting $(\Omega + A)_{n} \defeq \Omega_{n}$ for $n \geq 1$ and endowing $(\Omega + A)_{0} \defeq \Omega_{0} \cup A$ with the coproduct topology, i.e., the final topology generated by the inclusion maps $\Omega_{0} \to \Omega_{0} \cup A$ and $A \to \Omega_{0} \cup A$. As in Section~\ref{section:polynomial.boundedness}, we consider the signature $\Sigma \defeq (\Sigma_{n})_{n \in \mathbb{N}}$ where $\Sigma_{0} \defeq \{ 0, \ast \}$ and $\Sigma_{n} \defeq \{ n \}$ for all $n \geq 1$. Now let $\mathbf{A}$ be a topological algebra of type $\Omega$. In accordance with the construction preceding Lemma~\ref{lemma:bounded.translation.monoid}, we define a topological space $S(t)$ for each $t \in T_{\Sigma}(x)$ recursively as follows. We endow $S(x) \defeq \{ \varnothing \}$ with the unique topology on this singleton-set, $S(\ast) \defeq A$ with the given topology on $A$, and \begin{displaymath} 
	S (\omega t_{1} \ldots t_{n}) \defeq \Omega_{n} \times S(t_{1}) \times \ldots \times S(t_{n})
\end{displaymath} with the product topology whenever $n \in \mathbb{N}$, $\omega \in \Sigma_{n}$ and $t_{1},\ldots,t_{n} \in T_{\Sigma}(x)$.  We equip $D_{\Omega}(A)$ with the coproduct topology induced by the spaces $(S(t) \mid t \in T_{\Sigma}(x))$, i.e., the final topology generated by the maps $S(t) \to D_{\Omega}(A), \, s \mapsto (t,s)$ where $t \in T_{\Sigma}(x)$.

\begin{lem}\label{lemma:continuity} Let $\Omega$ be a continuous signature and let $\mathbf{A} = (A,E)$ be a topological algebra of type~$\Omega$. For each $t \in T_{\Sigma}(x)$, the map $\phi_{\mathbf{A}}(t)\colon S(t) \times A \to A$ is continuous. Consequently, the map $\Phi_{\mathbf{A}} \colon D_{\Omega}(A) \to C(A,A)$ is continuous with respect to the compact-open topology. \end{lem}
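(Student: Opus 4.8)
The plan is to prove continuity of each $\phi_{\mathbf{A}}(t)$ by structural induction on $t \in T_{\Sigma}(x)$, and then derive the continuity of $\Phi_{\mathbf{A}}$ with respect to the compact-open topology from the universal property of the coproduct topology on $D_{\Omega}(A)$ together with the exponential-law-type observation recalled just before the statement.

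\begin{proof}[Proof sketch]
We argue by induction on the structure of $t \in T_{\Sigma}(x)$ that $\phi_{\mathbf{A}}(t) \colon S(t) \times A \to A$ is continuous. For the base cases, $\phi_{\mathbf{A}}(x) \colon \{\varnothing\} \times A \to A$ is the second projection (up to the canonical homeomorphism $\{\varnothing\} \times A \cong A$), hence continuous, and $\phi_{\mathbf{A}}(\ast) \colon A \times A \to A$ is the first projection, hence continuous. For the inductive step, suppose $t = \omega t_{1} \ldots t_{n}$ with $\omega \in \Sigma_{n}$ and $t_{1}, \ldots, t_{n} \in T_{\Sigma}(x)$, and assume each $\phi_{\mathbf{A}}(t_{i}) \colon S(t_{i}) \times A \to A$ is continuous. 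By definition $S(t) = \Omega_{n} \times S(t_{1}) \times \cdots \times S(t_{n})$ carries the product topology, and $\phi_{\mathbf{A}}(t)$ sends $(\omega', s_{1}, \ldots, s_{n}, z)$ to $E_{n}(\omega', \phi_{\mathbf{A}}(t_{1})(s_{1}, z), \ldots, \phi_{\mathbf{A}}(t_{n})(s_{n}, z))$. This map factors as the composite of the continuous map
\[
\Omega_{n} \times S(t_{1}) \times \cdots \times S(t_{n}) \times A \longrightarrow \Omega_{n} \times A^{n}, \quad (\omega', s_{1}, \ldots, s_{n}, z) \longmapsto (\omega', \phi_{\mathbf{A}}(t_{1})(s_{1}, z), \ldots, \phi_{\mathbf{A}}(t_{n})(s_{n}, z)),
\]
which is continuous because each of its components is (the $\Omega_n$-component is a projection, and the $i$-th $A$-component is $\phi_{\mathbf{A}}(t_{i})$ precomposed with the continuous projection onto $S(t_i) \times A$), followed by the continuous structure map $E_{n} \colon \Omega_{n} \times A^{n} \to A$. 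Hence $\phi_{\mathbf{A}}(t)$ is continuous, completing the induction.

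It remains to deduce continuity of $\Phi_{\mathbf{A}} \colon D_{\Omega}(A) \to C(A,A)$. Since $D_{\Omega}(A)$ carries the final topology generated by the insertion maps $\iota_{t} \colon S(t) \to D_{\Omega}(A)$, $s \mapsto (t,s)$, it suffices to check that $\Phi_{\mathbf{A}} \circ \iota_{t} \colon S(t) \to C(A,A)$ is continuous for every $t \in T_{\Sigma}(x)$. By construction $(\Phi_{\mathbf{A}} \circ \iota_{t})(s)(z) = \phi_{\mathbf{A}}(t)(s,z)$, so $\Phi_{\mathbf{A}} \circ \iota_{t}$ is exactly the map $S(t) \to C(A,A)$ associated with the continuous map $\phi_{\mathbf{A}}(t) \colon S(t) \times A \to A$ via the observation recorded before the statement (with $Z = S(t)$ and $X = Y = A$). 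That observation then immediately yields continuity of $\Phi_{\mathbf{A}} \circ \iota_{t}$ with respect to the compact-open topology, and therefore of $\Phi_{\mathbf{A}}$.
\end{proof}

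The argument is essentially routine; the only point requiring mild care is bookkeeping in the inductive step — making sure the map into $\Omega_{n} \times A^{n}$ is assembled correctly as a tuple of continuous maps so that the given continuity of $E_{n}$ (as a map on $\Omega_n \times A^n$, i.e. using the continuity of the signature $\Omega$) can be applied. No genuine obstacle is anticipated; the lemma is a compatibility check linking the combinatorial construction preceding Lemma~\ref{lemma:bounded.translation.monoid} with the topologies introduced for $S(t)$, $D_{\Omega}(A)$, and the compact-open topology on $C(A,A)$.
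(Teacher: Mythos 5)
Your proof is correct and follows essentially the same route as the paper: a structural term induction for the continuity of each $\phi_{\mathbf{A}}(t)$, followed by the exponential-law observation recalled before the statement to pass to the compact-open topology. The only (cosmetic, if anything slightly more careful) difference is that you verify continuity summand-wise via the final topology on $D_{\Omega}(A)$, whereas the paper asserts continuity of the combined map $D_{\Omega}(A) \times A \to A$ and applies the same observation once.
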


\begin{proof} A straightforward term induction reveals that $\phi_{\mathbf{A}}(t)\colon S(t) \times A \to A$ is continuous for every $t \in T_{\Sigma}(x)$. Consequently, the map $D_{\Omega}(A) \times A \to A, \, ((t,s),z) \mapsto \phi_{\mathbf{A}}(t)(s,z)$ is continuous. Hence, it follows that $\Phi_{\mathbf{A}} \colon D_{\Omega}(A) \to C(A,A)$ is continuous with respect to the compact-open topology. \end{proof}

\begin{prop}\label{proposition:second.main.theorem} If $\Omega$ is compact and $\mathbf{A}$ is an affinely bounded compact topological $\Omega$-algebra, then $M(\mathbf{A})$ is compact with respect to the compact-open topology. \end{prop}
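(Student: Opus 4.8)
The plan is to combine Lemma~\ref{lemma:bounded.translation.monoid} with the continuity statement of Lemma~\ref{lemma:continuity}. Since $\mathbf{A}$ is affinely bounded, say by $m \in \mathbb{N}$, Lemma~\ref{lemma:bounded.translation.monoid} expresses $M(\mathbf{A})$ as the image under $\Phi_{\mathbf{A}}$ of the set
\begin{displaymath}
	D \defeq \{ (t,s) \mid t \in L_{\Sigma}^{\times}(x), \, \ar(t) \leq m, \, \h(t) \leq m, \, s \in S(t) \} \subseteq D_{\Omega}(A).
\end{displaymath}
By Lemma~\ref{lemma:continuity}, $\Phi_{\mathbf{A}} \colon D_{\Omega}(A) \to C(A,A)$ is continuous for the compact-open topology, so it suffices to show that $D$ is compact; then $M(\mathbf{A}) = \Phi_{\mathbf{A}}(D)$ is compact as a continuous image of a compact space.

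To see that $D$ is compact, I would first observe that there are only finitely many linear $\Sigma$-terms $t \in L_{\Sigma}^{\times}(x)$ with $\ar(t) \leq m$ and $\h(t) \leq m$: a linear $\Sigma$-term of height at most $m$ and arity at most $m$ is a finite tree of depth at most $m$ in which every internal node has at most $m$ children and the leaves are labelled from the finite set $\{x\} \cup \Sigma_{0} = \{x, 0, \ast\}$, with the linearity constraint forcing at most one occurrence of the leaf $x$; hence there are only finitely many such trees. Let $t_{1}, \dots, t_{k}$ be an enumeration of them. Then $D = \bigsqcup_{j=1}^{k} \{t_{j}\} \times S(t_{j})$ is a finite disjoint union, carrying the coproduct topology inherited from $D_{\Omega}(A)$, so it is enough to check that each $S(t_{j})$ is compact. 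This follows by a term induction: $S(x) = \{\varnothing\}$ is compact, $S(\ast) = A$ is compact by hypothesis, and $S(\omega t_{1} \ldots t_{n}) = \Omega_{n} \times S(t_{1}) \times \dots \times S(t_{n})$ is a finite product of compact spaces, using that $\Omega$ is a compact signature, hence compact by Tychonoff. (One should note that $\ar(t_j) \le m$ in particular bounds the arities $n$ appearing inside $t_j$, so only finitely many of the $\Omega_n$ are relevant, but in any case each $\Omega_n$ is compact.) Therefore $D$ is compact, and we are done.

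I do not anticipate a serious obstacle here; the argument is essentially a bookkeeping assembly of the two preceding lemmas plus the finiteness of bounded-height, bounded-arity linear $\Sigma$-terms and Tychonoff's theorem. The one point requiring a little care is the verification that the relevant subset $D$ of $D_{\Omega}(A)$ is genuinely compact \emph{in the coproduct topology}: since $D$ meets only finitely many of the summands $\{t\} \times S(t)$ and is a closed-and-open finite union of the compact pieces $\{t_j\} \times S(t_j)$, its compactness is immediate, but it is worth spelling out that restricting to finitely many summands is exactly what the arity and height bounds from affine boundedness buy us — this is the whole reason the definition of affine boundedness controls \emph{both} $\ar$ and $\h$.
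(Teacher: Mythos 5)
Your argument is correct and follows essentially the same route as the paper: express $M(\mathbf{A})$ via Lemma~\ref{lemma:bounded.translation.monoid} as the image under the continuous map $\Phi_{\mathbf{A}}$ (Lemma~\ref{lemma:continuity}) of the union of the finitely many summands $\{t\}\times S(t)$ with $\ar(t)\leq m$, $\h(t)\leq m$, each $S(t)$ being compact by term induction from compactness of $A$ and of the signature. The extra bookkeeping you supply (finiteness of the bounded linear $\Sigma$-terms, compactness of a finite union of compact summands in the coproduct topology) is exactly what the paper's proof uses implicitly.
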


\begin{proof} Let us first note the following: since $A$ is compact and $\Omega$ is a family of compact spaces, it follows by term induction that $S(t)$ is compact for every $t \in T_{\Sigma}(x)$. Now, suppose $\mathbf{A}$ to be affinely bounded by some $m \in \mathbb{N}$. As $L \defeq \{ t \in L^{\times}_{\Sigma}(x) \mid \ar(t) \leq m, \, \h(t) \leq m \}$ is finite, we conclude that $T \defeq \bigcup \{ \{ t \} \times S(t) \mid t \in L \}$ is a compact subset of $D_{\Omega}(A)$. By Lemma~\ref{lemma:bounded.translation.monoid}, it follows that $M(\mathbf{A}) = \Phi_{\mathbf{A}}(T)$. Since $\Phi_{A} \colon D_{A}(\Omega) \to C(A,A)$ is continuous due to Lemma~\ref{lemma:continuity}, we deduce that $M(\mathbf{A})$ is compact. \end{proof}

\section{Profinite topological algebras}\label{section:profinite.topological.algebras}

In this section we study profinite topological algebras. We show that a topological algebra is profinite if and only if its carrier space is a Stone space and its translation monoid is equicontinuous (Theorem~\ref{theorem:first.main.theorem}). In view of the well-known Arzela-Ascoli theorem, we furthermore reformulate this result in terms of the compact-open topology (Corollary~\ref{corollary:first.main.theorem}). Applying Proposition~\ref{proposition:second.main.theorem}, we conclude that an affinely bounded topological algebra over a compact signature is profinite if and only if its carrier space is a Stone space (Theorem~\ref{theorem:bounded.profinite.algebras}). 

Of course, we need to clarify some terminology. To this end, let $\mathbf{A} = (A,E)$ be a topological $\Omega$-algebra. We say that $\mathbf{A}$ is \emph{residually finite} if, for any two distinct elements $x,y \in A$, there exist a finite discrete topological $\Omega$-algebra $\mathbf{B}$ as well as a continuous homomorphism $\phi \colon \mathbf{A} \to \mathbf{B}$ such that $\phi (x) \ne \phi (y)$. Note that if $\mathbf{A}$ is residually finite, then $A$ is a Hausdorff space. We call $\mathbf{A}$ \emph{profinite} if $A$ is compact and $\mathbf{A}$ is residually finite. We refer to \cite{clark_et_al} for a detailed account on known characterizations of profiniteness, e.g., representability as a projective limit of finite discrete algebras.

In order to state and prove the above-mentioned Theorem~\ref{theorem:first.main.theorem}, we also need to recall some concepts concerning uniform spaces from \cite{Bourbaki1,Bourbaki2}. A \emph{uniformity} on $X$ is a filter $\mathcal{U}$ on the set $X \times X$ such that \begin{enumerate}
	\item $\forall \alpha \in \mathcal{U} \colon \, \Delta_{X} = \{ (x, x) \mid x \in X \} \subseteq \alpha$,
	\item $\forall \alpha \in \mathcal{U} \colon \, \alpha^{-1} \in \mathcal{U}$,
	\item $\forall \alpha \in \mathcal{U} \, \exists \beta \in \mathcal{U} \colon \, \beta \circ \beta \subseteq \alpha$.
\end{enumerate} A \emph{uniform space} is a non-empty set $X$ equipped with a uniformity on $X$, whose elements are called the \emph{entourages} of~$X$. For a uniform space $X$, the \emph{induced topology} on $X$ is defined as follows: a subset $S \subseteq X$ is \emph{open} in $X$ if, for every $x \in S$, there exists an entourage $\alpha$ of~$X$ such that $[x]_{\alpha} \defeq \{ y \in X \mid (x,y) \in \alpha \}$ is contained in $S$. Let $Z$ be a topological space. A set $F \subseteq X^{Z}$ is called \emph{equicontinuous} if for each point $z \in Z$ and every entourage $\alpha$ of $X$ there exists a neighborhood $U$ of $z$ in $Z$ such that \begin{displaymath}
	\forall f \in F \colon \, f(U) \times f(U) \subseteq \alpha .
\end{displaymath} Note that a map $f \colon Z \to X$ is continuous with respect to the induced topology on $X$ if and only if the set $\{ f \}$ is equicontinuous. Now, let $Y$ be another uniform space. A map $f \colon X \to Y$ is called \emph{uniformly continuous} if for every entourage $\alpha$ of~$Y$ there exists some entourage $\beta$ of~$X$ such that $(f \times f)(\beta) \subseteq \alpha$. Furthermore, a set $F \subseteq Y^{X}$ is called \emph{uniformly equicontinuous} if for every entourage $\alpha$ of $Y$ there exists some entourage $\beta$ of~$X$ such that \begin{displaymath}
	\forall f \in F \colon \, (f \times f)(\beta) \subseteq \alpha .
\end{displaymath} It is easy to see that any uniformly continuous map between uniform spaces is continuous with regard to the respective topologies, and that any uniformly equicontinuous set of maps between two uniform spaces is equicontinuous in the sense above.

A crucial example of uniform spaces is provided by the class of compact Hausdorff spaces.

\begin{prop}[{cf.~\cite[\S4.1, Th.~1]{Bourbaki1}, \cite[\S3.1, Cor.~2]{Bourbaki2}}]\label{proposition:compact.hausdorff.spaces} Let $X$ be a compact Hausdorff space. Then the following hold: \begin{enumerate}
	\item $\{ \alpha \subseteq X \times X \mid \exists \beta \subseteq X \times X \textit{ open}\colon \, \Delta_{X} \subseteq \beta \subseteq \alpha \}$ is the unique uniformity on $X$ inducing the topology of $X$.
	\item Let $Y$ be a uniform space. If $F \subseteq Y^{X}$ is equicontinuous, then $F$ is uniformly equicontinuous. In particular, if $f \colon X \to Y$ is continuous, then $f$ is uniformly continuous.
\end{enumerate} \end{prop}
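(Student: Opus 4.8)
The plan is to establish (1) and (2) in turn, treating (1) as the foundation.

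For part (1), first I would verify that the proposed family, call it $\mathcal{U} \defeq \{ \alpha \subseteq X \times X \mid \exists \beta \text{ open}, \, \Delta_{X} \subseteq \beta \subseteq \alpha \}$, is indeed a uniformity: it is obviously a filter containing the diagonal, and symmetry follows since for $\Delta_X \subseteq \beta \subseteq \alpha$ with $\beta$ open, the set $\beta \cap \beta^{-1}$ is open, contains $\Delta_X$, and is contained in $\alpha^{-1}$. The only non-formal axiom is the existence of "square roots": given open $\beta \supseteq \Delta_X$, I need an open $\gamma \supseteq \Delta_X$ with $\gamma \circ \gamma \subseteq \beta$. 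Here is where compactness enters. For each $x \in X$, using that $\beta$ is an open neighbourhood of $(x,x)$ and the tube lemma (or directly: openness in the product together with normality/compactness of $X$), choose an open $U_x \ni x$ with $U_x \times U_x \subseteq \beta$; then, using regularity of the compact Hausdorff space $X$, shrink to an open $V_x \ni x$ with $\overline{V_x} \subseteq U_x$. The open cover $\{V_x\}_{x\in X}$ of $X$ admits a finite subcover $V_{x_1}, \dots, V_{x_k}$; set $\gamma \defeq \bigcup_{i=1}^{k} V_{x_i} \times V_{x_i}$. A standard argument shows $\gamma \circ \gamma \subseteq \beta$: if $(a,b), (b,c) \in \gamma$ then $a,b \in V_{x_i}$ and $b,c \in V_{x_j}$ for some $i,j$; since $b \in V_{x_i} \cap V_{x_j}$ and, say, $V_{x_i}$ has the smaller closure-diameter, one gets $a, c \in U_{x_i}$, whence $(a,c) \in U_{x_i} \times U_{x_i} \subseteq \beta$. (The bookkeeping with "which $U$ is smaller" can be packaged via a Lebesgue-number style argument instead; either way this is the one genuinely topological step.) Then I would check that $\mathcal{U}$ induces the given topology: $[x]_\alpha$ for $\alpha \in \mathcal{U}$ is a neighbourhood of $x$ by construction, and conversely any open neighbourhood $W$ of $x$ gives the entourage $(X \times X) \setminus ((X \setminus W) \times \{x\})^{\text{sym}}$... more cleanly, $\alpha \defeq (W \times W) \cup ((X\setminus\{x\}) \times (X \setminus \{x\}))$ works once one notes it contains the open set $\Delta_X$-neighbourhood appropriately; the point is simply that the $[x]_\alpha$ form a neighbourhood base at each $x$. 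Uniqueness follows because any uniformity inducing the topology must, by a standard lemma, have its closed entourages cofinal, and the closed entourages are exactly the closed sets containing $\Delta_X$, hence contain an open one; so any such uniformity is contained in $\mathcal{U}$, and a uniformity contained in another inducing the same topology must coincide with it (again standard: a uniformity is determined by the induced topology whenever it is contained in $\mathcal{U}$, since every entourage of the larger one contains a closed entourage which is an entourage of the smaller).

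For part (2), let $F \subseteq Y^X$ be equicontinuous and let $\alpha$ be an entourage of $Y$; choose a symmetric entourage $\alpha_0$ of $Y$ with $\alpha_0 \circ \alpha_0 \subseteq \alpha$. By equicontinuity, for each $x \in X$ there is an open $U_x \ni x$ with $f(U_x) \times f(U_x) \subseteq \alpha_0$ for all $f \in F$. Using part (1), refine to open $V_x \ni x$ with $\overline{V_x} \subseteq U_x$ (in fact $V_x \times V_x$ will be inside the eventual entourage), extract a finite subcover $V_{x_1}, \dots, V_{x_k}$ by compactness, and put $\beta \defeq \bigcup_{i=1}^k V_{x_i} \times V_{x_i} \in \mathcal{U}$. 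If $(a,b) \in \beta$ then $a, b \in V_{x_i}$ for some $i$, so for every $f \in F$ we have $(f(a), f(b)) \in f(U_{x_i}) \times f(U_{x_i}) \subseteq \alpha_0 \subseteq \alpha$; thus $(f \times f)(\beta) \subseteq \alpha$ for all $f \in F$, i.e. $F$ is uniformly equicontinuous. The "in particular" is the case $F = \{f\}$, using the remark that continuity of $f$ is equicontinuity of $\{f\}$ and uniform continuity of $f$ is uniform equicontinuity of $\{f\}$.

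The main obstacle is the square-root axiom in part (1) — producing, from an arbitrary open neighbourhood of the diagonal in $X \times X$, an open neighbourhood whose self-composition lands inside it — which is precisely the place where compactness of $X$ is indispensable; everything else is filter bookkeeping or a direct re-use of that same finite-subcover trick. Since this is a classical result, I would in practice just cite \cite[\S4.1, Th.~1]{Bourbaki1} and \cite[\S3.1, Cor.~2]{Bourbaki2} rather than reproduce the argument, but the sketch above is the content behind those references.
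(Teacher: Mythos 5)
The paper itself gives no proof of this proposition --- it is imported verbatim from Bourbaki, with exactly the two references you cite --- so your closing remark that in practice you would just cite \cite[\S4.1, Th.~1]{Bourbaki1} and \cite[\S3.1, Cor.~2]{Bourbaki2} is precisely what the paper does. Your argument for part (2) is correct, and in fact can be shortened: with the paper's definition of equicontinuity you need neither the symmetric $\alpha_0$ with $\alpha_0 \circ \alpha_0 \subseteq \alpha$ nor the shrunk sets $V_x$; if $U_{x_1},\ldots,U_{x_k}$ is a finite subcover of the cover provided by equicontinuity, then $\beta \defeq \bigcup_{i} U_{x_i} \times U_{x_i}$ is already an open set containing $\Delta_X$, hence an entourage by (1), and $(f \times f)(\beta) \subseteq \alpha$ for all $f \in F$.

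The genuine gap is in the square-root axiom in part (1), which you yourself identify as the one substantive step. In a general compact Hausdorff space there is no metric, so ``closure-diameter'' of $V_{x_i}$ is meaningless, and the set $\gamma \defeq \bigcup_i V_{x_i} \times V_{x_i}$ obtained from a shrunk finite subcover need not satisfy $\gamma \circ \gamma \subseteq \beta$: from $a,b \in V_{x_i}$ and $b,c \in V_{x_j}$ you only know that the two sets meet at $b$, and nothing places $a$ and $c$ in a common $U$ with $U \times U \subseteq \beta$; the closure-shrinking does not repair this. What is needed is a star-refinement or Lebesgue-type step, for instance: take a closed shrinking $C_i \subseteq U_i$ of the finite subcover (compact Hausdorff spaces are normal), put $\gamma_0 \defeq \bigcap_i \bigl( ((X\setminus C_i) \times X) \cup (X \times U_i) \bigr)$, which is an open neighbourhood of $\Delta_X$ with $[x]_{\gamma_0} \subseteq U_i$ whenever $x \in C_i$, and set $\gamma \defeq \gamma_0 \cap \gamma_0^{-1}$; centring the estimate at the middle point $b$ (choose $i$ with $b \in C_i$, so that $a,c \in [b]_{\gamma_0} \subseteq U_i$) gives $\gamma \circ \gamma \subseteq \bigcup_i U_i \times U_i \subseteq \beta$. (A subnet compactness argument is an alternative.) Relatedly, in your uniqueness sketch the auxiliary claim that a uniformity contained in another one inducing the same topology must coincide with it is false in general --- the metric and the fine uniformity on $\mathbb{R}$ induce the same topology --- so the inclusion $\mathcal{U} \subseteq \mathcal{V}$ for a compatible uniformity $\mathcal{V}$ again requires a compactness argument (cover the compact set $(X \times X) \setminus \beta$ by finitely many products of small balls). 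None of this affects the paper, which proves nothing here, but as written your sketch of (1) would not compile into a proof without these repairs.
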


In order to relate profiniteness of a topological algebra to equicontinuity of its translation monoid, we establish the following observation.

\begin{lem}\label{lemma:congruences.generate.uniformity} Let $\Omega$ be a continuous signature and let $\mathbf{A} = (A,E)$ be a profinite topological algebra of type $\Omega$. If $\alpha$ is an entourage of $A$, then there exist a finite discrete $\Omega$-algebra $\mathbf{B}$ and a continuous homomorphism $\phi \colon \mathbf{A} \to \mathbf{B}$ such that $\ker \phi \subseteq \alpha$. \end{lem}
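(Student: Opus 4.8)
The plan is to exploit that a profinite algebra's carrier $A$ is compact Hausdorff, so that $A\times A$ is a compact Hausdorff space carrying a unique compatible uniformity (Proposition~\ref{proposition:compact.hausdorff.spaces}), and then to run a finite-intersection-property compactness argument on the kernels of continuous homomorphisms into finite discrete algebras.

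First, since $\mathbf{A}$ is profinite it is residually finite, so $A$ is compact Hausdorff, and hence so is $A\times A$. By Proposition~\ref{proposition:compact.hausdorff.spaces}(1) the entourage $\alpha$ contains an \emph{open} subset $\beta$ of $A\times A$ with $\Delta_A\subseteq\beta\subseteq\alpha$; since it suffices to produce $\phi$ with $\ker\phi\subseteq\beta$, I may assume from the outset that $\alpha$ itself is open. Then $K\defeq (A\times A)\setminus\alpha$ is a closed, hence compact, subset of $A\times A$, and $K\cap\Delta_A=\varnothing$. If $K=\varnothing$ (i.e.\ $\alpha=A\times A$), the homomorphism to the one-element $\Omega$-algebra already works, so I may assume $K\ne\varnothing$.

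Next, for each $(x,y)\in K$ we have $x\ne y$, so residual finiteness furnishes a finite discrete $\Omega$-algebra $\mathbf{B}_{(x,y)}$ together with a continuous homomorphism $\phi_{(x,y)}\colon\mathbf{A}\to\mathbf{B}_{(x,y)}$ satisfying $\phi_{(x,y)}(x)\ne\phi_{(x,y)}(y)$, equivalently $(x,y)\notin\ker\phi_{(x,y)}$. Since $\mathbf{B}_{(x,y)}$ is discrete, $\ker\phi_{(x,y)}=(\phi_{(x,y)}\times\phi_{(x,y)})^{-1}(\Delta_{B_{(x,y)}})$ is closed in $A\times A$, so the sets $K\setminus\ker\phi_{(x,y)}$ with $(x,y)\in K$ form an open cover of the compact space $K$. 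Extracting a finite subcover yields pairs $(x_1,y_1),\dots,(x_n,y_n)\in K$ with $K\subseteq\bigcup_{i=1}^n\bigl(K\setminus\ker\phi_{(x_i,y_i)}\bigr)$, i.e.\ $\bigcap_{i=1}^n\ker\phi_{(x_i,y_i)}$ is disjoint from $K$ and therefore contained in $\alpha$.

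Finally, I take $\mathbf{B}\defeq\prod_{i=1}^n\mathbf{B}_{(x_i,y_i)}$ — again a finite discrete $\Omega$-algebra, with operations defined coordinatewise (continuity being immediate since the target is discrete) — and let $\phi\colon\mathbf{A}\to\mathbf{B}$ be the continuous homomorphism whose components are $\phi_{(x_1,y_1)},\dots,\phi_{(x_n,y_n)}$; then $\ker\phi=\bigcap_{i=1}^n\ker\phi_{(x_i,y_i)}\subseteq\alpha$, as required. Most of this is routine bookkeeping; the one step that genuinely uses the hypotheses is the opening reduction to an open entourage via Proposition~\ref{proposition:compact.hausdorff.spaces}, for without it $(A\times A)\setminus\alpha$ need not be compact and the finite-subcover step would break down.
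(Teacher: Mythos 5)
Your argument is correct and follows essentially the same route as the paper's: replace $\alpha$ by an open neighbourhood of $\Delta_A$ (the paper uses the interior of $\alpha$, you invoke Proposition~\ref{proposition:compact.hausdorff.spaces}(1)), cover the compact complement by the open complements of the kernels provided by residual finiteness, extract a finite subcover, and pass to the product of the finitely many finite discrete algebras. The only cosmetic difference is your separate treatment of the case $\alpha = A \times A$, which the paper absorbs by allowing an empty index set.
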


\begin{proof} Let $\beta$ denote the interior of $\alpha$ in the product space $A \times A$. Evidently, $\Delta_{A} \subseteq \beta$ because $\alpha$ is an entourage of $A$. As $\mathbf{A}$ is residually finite, there exist a family of finite discrete topological $\Omega$-algebras $(\mathbf{B}_{i})_{i \in I}$ as well as a corresponding family of continuous homomorphisms $\phi_{i} \colon \mathbf{A} \to \mathbf{B}_{i}$ ($i \in I$) such that \begin{displaymath}
	\forall (x,y) \in (A \times A)\setminus \beta \, \exists i \in I \colon \, \phi_{i}(x) \ne \phi_{i}(y) .
\end{displaymath} It follows that $\{ (\phi_{i} \times \phi_{i})^{-1}((B_{i} \times B_{i})\setminus \Delta_{B_{i}}) \mid i \in I \}$ provides a cover of $(A \times A)\setminus \beta$ by open subsets of $A \times A$. Since $(A \times A)\setminus \beta$ is a closed and hence compact subset of $A \times A$, there exists a finite subset $F \subseteq I$ such that \begin{displaymath}
	(A \times A)\setminus \beta \subseteq \bigcup\nolimits_{i \in F} (\phi_{i} \times \phi_{i})^{-1}((B_{i} \times B_{i})\setminus \Delta_{B_{i}}) ,
\end{displaymath} which means that $\bigcap_{i \in F} \ker \phi_{i} \subseteq \beta$. Now, consider the finite discrete topological $\Omega$-algebra $\mathbf{B} \defeq \prod_{i \in F} \mathbf{B}_{i}$ and the continuous homomorphism $\phi \colon \mathbf{A} \to \mathbf{B}$ defined by $\phi (a) \defeq (\phi_{i}(a))_{i \in F}$ for all $a \in A$. Evidently, $\ker \phi = \bigcap_{i \in F} \ker \phi_{i} \subseteq \beta$. This proves the claim. \end{proof}

The subsequent lemma ensures that Hausdorff quotients of compact topological algebras in fact provide topological algebras.

\begin{lem}\label{lemma:topological.quotient.algebras} Let $\Omega$ be a continuous signature and let $\mathbf{A} = (A,E)$ be a compact topological algebra of type $\Omega$.  Let $\theta$ be a closed congruence of $\mathbf{A}$ and consider the Hausdorff quotient space $B \defeq A/\theta$. Then the map \begin{displaymath}
	E_{n}^{\ast} \colon \Omega_{n} \times B^{n} \to B , \quad (\omega,([a_{1}]_{\theta},\ldots ,[a_{n}]_{\theta})) \mapsto [E_{n}(\omega ,(a_{1},\ldots ,a_{n}))]_{\theta}
\end{displaymath} is continuous for every $n \in \mathbb{N}$. That is, $\mathbf{A}/\theta = (B,E^{\ast})$ is a topological $\Omega$-algebra. \end{lem}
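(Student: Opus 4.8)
The plan is to show that for every $n \in \mathbb{N}$ the map $E_n^\ast$ is well defined and continuous; then $\mathbf{A}/\theta = (B,E^\ast)$ is a topological $\Omega$-algebra by definition. Well-definedness is the purely algebraic ingredient: fix $\omega \in \Omega_n$ with $n \geq 1$ (the case $n = 0$ being immediate, since $E_0^\ast = q \circ E_0$ for the quotient map $q \colon A \to B$), and suppose $(a_i,b_i) \in \theta$ for $i = 1,\ldots,n$. Moving from $(a_1,\ldots,a_n)$ to $(b_1,\ldots,b_n)$ one coordinate at a time, each step applies a translation of $\mathbf{A}$ to a pair in $\theta$; since $\theta$ is a congruence, Lemma~\ref{lemma:congruences} together with transitivity of $\theta$ yields $(E_n(\omega,a_1,\ldots,a_n),E_n(\omega,b_1,\ldots,b_n)) \in \theta$, so $[E_n(\omega,a_1,\ldots,a_n)]_\theta$ depends only on $\omega$ and the classes $[a_1]_\theta,\ldots,[a_n]_\theta$, and $E_n^\ast$ is well defined. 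I would also note at the outset that $B = A/\theta$ is compact (a continuous image of $A$) and Hausdorff, and that $q$ is continuous and surjective.

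For continuity of $E_n^\ast \colon \Omega_n \times B^n \to B$ I would pass to nets, using the standard fact that a net converges to a point $b$ as soon as every subnet of it has a further subnet converging to $b$. Fix a net in $\Omega_n \times B^n$ converging to $(\omega,[a_1]_\theta,\ldots,[a_n]_\theta)$, and consider an arbitrary subnet. Using surjectivity of $q$, lift the $B^n$-coordinates of this subnet to a net in $A^n$; by compactness of $A^n$, a further subnet of these lifts converges to some $(a_1',\ldots,a_n') \in A^n$, and since the images under $q$ of these lifts converge both to $[a_i']_\theta$ and to $[a_i]_\theta$, Hausdorffness of $B$ forces $[a_i']_\theta = [a_i]_\theta$ for each $i$. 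Now continuity of $E_n$ sends this further subnet of lifts, paired with the corresponding $\Omega_n$-coordinates (which converge to $\omega$), to a net converging to $E_n(\omega,a_1',\ldots,a_n')$ in $A$; applying the continuous map $q$ and unwinding the definition of $E_n^\ast$, the associated subnet of values converges to $q(E_n(\omega,a_1',\ldots,a_n')) = E_n^\ast(\omega,[a_1']_\theta,\ldots,[a_n']_\theta) = E_n^\ast(\omega,[a_1]_\theta,\ldots,[a_n]_\theta)$. As the original subnet was arbitrary, the whole net of values converges to $E_n^\ast(\omega,[a_1]_\theta,\ldots,[a_n]_\theta)$, which is precisely continuity of $E_n^\ast$ at that point.

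The one delicate point is the transfer of continuity along $q$ just carried out: one cannot simply invoke that a continuous surjection from a compact space onto a Hausdorff space is a quotient map, because $\Omega_n$ is not assumed compact, so $\Omega_n \times A^n$ need not be compact. The net argument sidesteps this. Alternatively, one can observe that each $\theta$-class is closed in $A$ (it is the preimage of $\theta$ under $b \mapsto (a,b)$) and hence compact, so $q$ is a perfect map; then $\mathrm{id}_{\Omega_n} \times q^n \colon \Omega_n \times A^n \to \Omega_n \times B^n$ is perfect, in particular a quotient map, and since $E_n^\ast \circ (\mathrm{id}_{\Omega_n} \times q^n) = q \circ E_n$ is continuous, so is $E_n^\ast$. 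Either route yields the lemma, the remaining checks being routine.
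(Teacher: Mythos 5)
Your proposal is correct, but your primary argument takes a genuinely different route from the paper. The paper's proof proceeds exactly along the lines of the ``alternative'' you sketch at the end: it observes that $\psi_{n} \colon A^{n} \to B^{n}$ is a continuous map from a compact space to a Hausdorff space, hence proper, so that $\pi_{n} = \mathrm{id}_{\Omega_{n}} \times \psi_{n} \colon \Omega_{n} \times A^{n} \to \Omega_{n} \times B^{n}$ is closed and therefore a quotient map, and then concludes from the continuity of $E_{n}^{\ast} \circ \pi_{n} = \phi \circ E_{n}$ that $E_{n}^{\ast}$ is continuous; this is essentially your perfect-map variant (where you should note explicitly that closedness of $q$, not just compactness of its fibres, is what makes it perfect -- it follows since $A$ is compact and $B$ Hausdorff). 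Your main route instead avoids properness altogether: you verify continuity of $E_{n}^{\ast}$ by the subnet criterion, lifting the $B^{n}$-coordinates through $q$, extracting a convergent subnet of lifts by compactness of $A^{n}$, and using Hausdorffness of $B$ to identify the limiting classes. This is more elementary (it uses only compactness, Hausdorffness and the well-definedness of $E_{n}^{\ast}$, which you correctly derive from Lemma~\ref{lemma:congruences} and which the paper leaves implicit) but longer; the paper's properness argument buys a one-line transfer of continuity along a quotient map. Both treatments correctly isolate the delicate point that $\Omega_{n}$ is not assumed compact, so one cannot argue with compactness of $\Omega_{n} \times A^{n}$ directly.
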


\begin{proof} Recall that~$B$ is a Hausdorff space, since~$A$ is a compact space and~$\theta$ is closed in $A \times A$ (see~\cite[\S 10.4, Prop.~8]{Bourbaki1}). Denote the quotient map by $\phi \colon A \to B, \, a \mapsto [a]_{\theta}$. Let $n \in \mathbb{N}$. We claim that $E_{n}^{\ast} \colon \Omega_{n} \times B^{n} \to B$ is continuous. Evidently, $\psi_{n} \colon A^{n} \to B^{n}$,  $(a_1, \dots, a_n) \mapsto ([a_1]_{\theta}, \dots, [a_n]_{\theta})$ is a continuous map from a compact space to a Hausdorff space. From this it readily follows that $\psi_{n}$ is proper (see \cite[\S 10.2, Cor.~2]{Bourbaki1}). Hence, the continuous surjection $\pi_{n} \colon \Omega_{n} \times A^{n} \to \Omega_{n} \times B^{n}, \, (\omega,a) \mapsto (\omega,\psi_{n}(a))$ is a closed map, and thus $\pi_{n}$ is a quotient map, i.e., the topology of $\Omega_{n} \times B^{n}$ is the final topology generated by~$\pi_{n}$. Furthermore, $E_{n}^{\ast} \circ \pi_{n} = \phi \circ E_{n}$ is continuous. Consequently, $E_{n}^{\ast}$ is continuous. \end{proof}

Next we shall prove the following characterization of profinite topological algebras.

\begin{thm}\label{theorem:first.main.theorem} Let $\Omega$ be a continuous signature and let $\mathbf{A} = (A,E)$ be a topological algebra of type $\Omega$. Then the following are equivalent: \begin{enumerate}
	\item $\mathbf{A}$ is profinite.
	\item $A$ is a Stone space and $M(\mathbf{A})$ is equicontinuous.
\end{enumerate} \end{thm}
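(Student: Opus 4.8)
The plan is to prove both implications, with the direction $(1)\Rightarrow(2)$ being the more delicate one. For $(1)\Rightarrow(2)$, assume $\mathbf{A}$ is profinite. Compactness of $A$ is immediate, and residual finiteness gives a Hausdorff separation by finite discrete quotients; since each such quotient is totally disconnected and the diagonal embedding $A\hookrightarrow\prod_i B_i$ is a topological embedding (a continuous injection from a compact space into a Hausdorff space), $A$ is a closed subspace of a product of finite discrete spaces, hence a Stone space. The substantial point is equicontinuity of $M(\mathbf{A})$. Here I would use Lemma~\ref{lemma:congruences.generate.uniformity}: given an entourage $\alpha$ of $A$, there is a continuous homomorphism $\phi\colon\mathbf{A}\to\mathbf{B}$ onto a finite discrete algebra with $\ker\phi\subseteq\alpha$. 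The key observation is that $\ker\phi$ is a congruence of $\mathbf{A}$, so by Lemma~\ref{lemma:congruences} it is invariant under every $f\in M(\mathbf{A})$: for all $f\in M(\mathbf{A})$ and all $(a,b)\in\ker\phi$ we have $(f(a),f(b))\in\ker\phi\subseteq\alpha$. Now fix $z\in A$. Since $\ker\phi$ is open (the fibers of a map to a finite discrete space are clopen), $U\defeq[z]_{\ker\phi}$ is an open neighbourhood of $z$, and for every $f\in M(\mathbf{A})$ and $y,y'\in U$ we have $(y,y')\in\ker\phi$, hence $(f(y),f(y'))\in\alpha$, i.e.\ $f(U)\times f(U)\subseteq\alpha$. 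This is exactly equicontinuity of $M(\mathbf{A})$.

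For $(2)\Rightarrow(1)$, assume $A$ is a Stone space and $M(\mathbf{A})$ is equicontinuous. Compactness of $A$ holds by hypothesis, so it remains to establish residual finiteness: given distinct $x,y\in A$, I must produce a continuous homomorphism to a finite discrete algebra separating them. Since $A$ is a Stone space, by Proposition~\ref{proposition:compact.hausdorff.spaces}(1) its topology comes from the unique compatible uniformity, and there is an entourage $\alpha$ with $(x,y)\notin\alpha$; moreover, by total disconnectedness one can take $\alpha$ to be a clopen equivalence relation of finite index (the clopen partitions of a Stone space are cofinal among entourages). The idea is to shrink $\alpha$ to an $M(\mathbf{A})$-invariant clopen equivalence relation $\theta\subseteq\alpha$ of finite index, so that $\theta$ is a congruence of $\mathbf{A}$ separating $x$ and $y$; since $\theta$ is clopen and of finite index, $\mathbf{A}/\theta$ is a finite discrete algebra (a topological algebra by Lemma~\ref{lemma:topological.quotient.algebras}, the congruence being closed), and the quotient map is the desired continuous homomorphism.

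The construction of $\theta$ is where equicontinuity enters and is the main obstacle. Starting from a clopen equivalence relation $\alpha_0\subseteq\alpha$ of finite index, equicontinuity of $M(\mathbf{A})$ should be used, together with the compactness of $A$ and Proposition~\ref{proposition:compact.hausdorff.spaces}(2) (which upgrades equicontinuity to uniform equicontinuity on the compact space $A$), to find a clopen equivalence relation $\beta\subseteq\alpha_0$ of finite index with $(f\times f)(\beta)\subseteq\alpha_0$ for all $f\in M(\mathbf{A})$ simultaneously. One then iterates: build a decreasing sequence of finite-index clopen equivalence relations $\alpha_0\supseteq\alpha_1\supseteq\cdots$ with $(f\times f)(\alpha_{k+1})\subseteq\alpha_k$ for all $f\in M(\mathbf{A})$, and set $\theta\defeq\bigcap_k\alpha_k$. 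Then $\theta$ is $M(\mathbf{A})$-invariant, hence a congruence by Lemma~\ref{lemma:congruences}, and it is closed; the remaining subtlety is to arrange that $\theta$ still has finite index (equivalently is clopen) and still separates $x$ from $y$ — the latter is ensured by keeping $(x,y)\notin\alpha_0$, while finiteness of the index requires care, perhaps by working below a single fixed finite-index clopen equivalence relation and exploiting that $M(\mathbf{A})$ permutes its finitely many blocks, so that only finitely many refinements occur and the intersection stabilises. Verifying that this stabilisation indeed happens — i.e.\ that uniform equicontinuity forces the descending chain to be eventually constant, or that one can directly take $\theta$ to be the largest $M(\mathbf{A})$-invariant refinement of $\alpha_0$ and check it has finite index — is the technical heart of the argument.
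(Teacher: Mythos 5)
Your direction (1)$\Rightarrow$(2) is correct and essentially the paper's argument: Lemma~\ref{lemma:congruences.generate.uniformity} produces $\phi$ with $\ker\phi\subseteq\alpha$, and invariance of the congruence $\ker\phi$ under $M(\mathbf{A})$ (Lemma~\ref{lemma:congruences}) together with openness of its classes gives equicontinuity. The problem is direction (2)$\Rightarrow$(1), where your argument is genuinely incomplete at exactly the point you flag: your iterative scheme $\alpha_0\supseteq\alpha_1\supseteq\cdots$ with $(f\times f)(\alpha_{k+1})\subseteq\alpha_k$ gives an intersection $\theta=\bigcap_k\alpha_k$ that is closed and invariant, but nothing in the construction forces $\theta$ to be clopen, of finite index, or the chain to stabilise; and the heuristic that ``$M(\mathbf{A})$ permutes the finitely many blocks'' is not available, since translations need not be bijective and need not map blocks onto blocks. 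As it stands, the technical heart of the implication is missing, not merely deferred.

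The gap can be closed without any iteration, and this is what the paper does. Given distinct $x,y$, take a clopen $C$ with $C\cap\{x,y\}=\{x\}$ and let $\alpha\defeq(C\times C)\cup((A\setminus C)\times(A\setminus C))$, an open (hence entourage) equivalence relation of index $2$. Then define directly $\beta\defeq\bigcap_{f\in M(\mathbf{A})}(f\times f)^{-1}(\alpha)$ --- in effect your ``largest $M(\mathbf{A})$-invariant refinement'', which is automatically an equivalence relation, invariant because $M(\mathbf{A})$ is a monoid (so $\beta$ is a congruence by Lemma~\ref{lemma:congruences}), and contained in $\alpha$ since $\id_A\in M(\mathbf{A})$. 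Now apply equicontinuity \emph{once}: there is an entourage $\gamma$ with $(f\times f)(\gamma)\subseteq\alpha$ for all $f\in M(\mathbf{A})$ simultaneously, hence $\gamma\subseteq\beta$. Consequently every $\beta$-class contains $[b]_\gamma$ for each of its points $b$, so all classes are open; compactness of $A$ then forces $A/\beta$ to be finite and discrete, and $\beta$ (a finite union of products of open classes, with open complement) is closed, so Lemma~\ref{lemma:topological.quotient.algebras} applies and the quotient map separates $x$ from $y$ because $\beta\subseteq\alpha$. Note that finiteness of the index is a \emph{consequence} of $\gamma\subseteq\beta$ plus compactness; it does not have to be engineered beforehand, which is why no stabilisation argument is needed.
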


\begin{proof} (1)$\Longrightarrow$(2). Suppose $\mathbf{A}$ to be profinite. Of course, this readily implies $A$ to be a compact Hausdorff space. Furthermore, $A$ is totally disconnected as it embeds into a product of discrete and therefore totally disconnected spaces. We are left to show that $M(\mathbf{A})$ is equicontinuous. Let $\alpha$ be an entourage of $A$. Since $\mathbf{A}$ is profinite, Lemma~\ref{lemma:congruences.generate.uniformity} asserts the existence of a finite discrete algebra~$\mathbf{B}$ and a continuous homomorphism $\phi \colon \mathbf{A} \to \mathbf{B}$ such that $\beta \defeq \ker \phi \subseteq \alpha$. By Proposition~\ref{proposition:compact.hausdorff.spaces}, $\phi$ is uniformly continuous. Since $\Delta_{B}$ is an entourage of $B$, this implies that $\beta = (\phi \times \phi)^{-1}(\Delta_{B})$ constitutes an entourage of $A$. Moreover, as $\phi$ is a homomorphism and thus $\beta$ is a congruence, Lemma~\ref{lemma:congruences} asserts that $(f \times f)(\beta) \subseteq \beta \subseteq \alpha$ for every $f \in M(\mathbf{A})$. Thus, $M(\mathbf{A})$ is equicontinuous.

(2)$\Longrightarrow$(1). Let $x,y \in A$ such that $x \ne y$. Since $A$ is a totally disconnected compact Hausdorff space, there exists a clopen subset $C \subseteq A$ such that $C \cap \{ x,y \} = \{ x \}$. Evidently, $\mathcal{P} \defeq \{ C, \, A\setminus C \}$ constitutes a partition of $A$ into clopen subsets. Hence, the corresponding equivalence relation $\alpha \defeq (C \times C) \cup ((A\setminus C) \times (A \setminus C))$ is an open subset of $A \times A$ and therefore an entourage of $A$. According to Lemma~\ref{lemma:congruences}, the equivalence relation $\beta \defeq \bigcap_{f \in M(\mathbf{A})} (f \times f)^{-1}(\alpha )$ is indeed a congruence on $\mathbf{A}$, because $(f(a),f(b)) \in \beta$ for all $(a,b) \in \beta$ and $f \in M(\mathbf{A})$. Now, since $M(\mathbf{A})$ is equicontinuous, there exists some entourage $\gamma$ of $A$ such that $(f \times f)(\gamma) \subseteq \alpha$ for every $f \in M(\mathbf{A})$. Hence, $\gamma \subseteq \beta$. It follows that $A/\beta$ is a collection of disjoint, open subsets of~$A$: in fact, for each $a \in A$, the subset $[a]_{\beta}$ is open in~$A$, as $[b]_{\gamma} \subseteq [b]_{\beta} = [a]_{\beta}$ for every $b \in [a]_{\beta}$. We conclude that the quotient space $A/\beta$ is discrete. As $A$ is compact, this implies $A/\beta$ to be finite. Besides, Lemma~\ref{lemma:topological.quotient.algebras} asserts that $\mathbf{B} \defeq \mathbf{A}/\beta$ is a topological $\Omega$-algebra. Evidently, $\phi \colon A \to A/\beta, \, a \mapsto [a]_{\beta}$ provides a continuous homomorphism from $\mathbf{A}$ to $\mathbf{B}$. Finally, we observe that $\phi(x) \ne \phi(y)$. This shows that $\mathbf{A}$ is profinite. \end{proof}

According to the celebrated Arzela-Ascoli theorem (Theorem~\ref{theorem:arzela.ascoli}), equicontinuity can be described in terms of relative compactness with regard to the corresponding compact-open topology. Recall that a subset $S$ of a topological space $X$ is \emph{relatively compact} if $S$ is contained in a compact subset of $X$.

\begin{thm}[{Arzela-Ascoli, cf.~\cite[\S2.5, Cor.~3]{Bourbaki2}}]\label{theorem:arzela.ascoli} Let~$X$ and~$Y$ be compact Hausdorff topological spaces.  A subset $F \subseteq C(X,Y)$ is equicontinuous if and only if $F$ is relatively compact in $C(X,Y)$. \end{thm}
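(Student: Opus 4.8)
The plan is to run the classical Arzela-Ascoli argument, exploiting two preliminary facts. First, since $X$ is compact, the compact-open topology on $C(X,Y)$ coincides with the topology of uniform convergence induced by the (unique) uniformity of the compact Hausdorff space $Y$ (Proposition~\ref{proposition:compact.hausdorff.spaces}(1)). Second, $C(X,Y)$ is Hausdorff because $Y$ is. I would then prove the two implications separately, throughout using the intrinsic uniformity of $Y$ and the entourage calculus.

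For ``relatively compact $\Longrightarrow$ equicontinuous'': a relatively compact subset of a uniform space is totally bounded (its closure is compact, hence complete and totally bounded), so $F$ is totally bounded for the uniformity of uniform convergence. Given $x_{0} \in X$ and an entourage $\alpha$ of $Y$, I would choose a symmetric entourage $\alpha'$ of $Y$ with $\alpha' \circ \alpha' \circ \alpha' \subseteq \alpha$, use total boundedness to find $f_{1},\dots,f_{k} \in C(X,Y)$ such that every $f \in F$ is uniformly $\alpha'$-close to some $f_{j}$, and then use that each $f_{j}$ — being continuous on the compact Hausdorff space $X$ — is uniformly continuous (Proposition~\ref{proposition:compact.hausdorff.spaces}(2)) to obtain neighbourhoods $U_{j}$ of $x_{0}$ with $f_{j}(U_{j}) \times f_{j}(U_{j}) \subseteq \alpha'$. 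Then $U \defeq \bigcap_{j} U_{j}$ is the desired neighbourhood: for $f \in F$ uniformly $\alpha'$-close to $f_{j}$ and $x,x' \in U$, a three-step chase through $\alpha'$ yields $(f(x),f(x')) \in \alpha$, so $F$ is equicontinuous.

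For the converse, ``equicontinuous $\Longrightarrow$ relatively compact'', I would let $G$ be the closure of $F$ in $Y^{X}$ equipped with the product (pointwise-convergence) topology; by Tychonoff's theorem $G$ is compact there. The steps are: (i) check that $G$ is again equicontinuous — the point being that equicontinuity of $F$ at $x_{0}$ can be witnessed using a \emph{closed} entourage of $Y$ (closed entourages form a base of the uniformity of a compact Hausdorff space), so the defining inclusion survives pointwise limits; (ii) deduce $G \subseteq C(X,Y)$, since every member of an equicontinuous family is continuous; and (iii) show that on the equicontinuous set $G$ the topology of pointwise convergence agrees with that of uniform convergence — for this I would verify that a net in $G$ converging pointwise to $g \in G$ converges uniformly, by covering $X$ with finitely many of the neighbourhoods supplied by equicontinuity and applying a three-fold refinement of the given entourage. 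Since the compact-open topology on $C(X,Y)$ is precisely the uniform-convergence topology (as $X$ is compact) and is intrinsic, this exhibits $G$ as a compact subset of $C(X,Y)$ containing $F$, so $F$ is relatively compact in $C(X,Y)$.

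I expect the main obstacle to be step~(iii): forcing the pointwise and uniform-convergence topologies to coincide on the equicontinuous family $G$, which is exactly where equicontinuity is used essentially; step~(i), with its careful bookkeeping of closed entourages, is the second delicate point, while the remaining entourage manipulations are routine. All of this is of course classical — compare \cite[\S2.5, Cor.~3]{Bourbaki2}.
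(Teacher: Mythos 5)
The paper does not prove this statement at all --- it is quoted as a classical theorem with a pointer to Bourbaki (\S 2.5, Cor.~3 of the cited volume) --- so there is no internal proof to compare against; your sketch is the standard Ascoli argument from exactly that source and it is correct. Both directions go through as you describe: relative compactness gives total boundedness for the uniformity of uniform convergence (which induces the compact-open topology since $X$ is compact), and the three-entourage chase with uniformly continuous centres yields equicontinuity; conversely, the pointwise closure in $Y^{X}$ is compact by Tychonoff, remains equicontinuous because closed entourages form a base, consists of continuous maps, and carries the uniform-convergence topology as the pointwise one, so it is a compact subset of $C(X,Y)$ containing $F$.
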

 	
The subsequent result follows immediately from Theorem~\ref{theorem:first.main.theorem} and Theorem~\ref{theorem:arzela.ascoli}.
 	
\begin{cor}\label{corollary:first.main.theorem} Let $\Omega$ be a continuous signature and let $\mathbf{A} = (A,E)$ be a topological algebra of type $\Omega$. Then the following are equivalent: \begin{enumerate}
	\item $\mathbf{A}$ is profinite.
	\item $A$ is a Stone space and $M(\mathbf{A})$ is relatively compact in $C(A,A)$.
\end{enumerate} \end{cor}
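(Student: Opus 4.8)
The plan is to derive Corollary~\ref{corollary:first.main.theorem} directly from Theorem~\ref{theorem:first.main.theorem} together with the Arzela-Ascoli theorem (Theorem~\ref{theorem:arzela.ascoli}), observing that both statements share the condition ``$A$ is a Stone space'' and differ only in how they phrase a property of $M(\mathbf{A})$.

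First I would treat the forward implication. Suppose $\mathbf{A}$ is profinite. By Theorem~\ref{theorem:first.main.theorem}, $A$ is a Stone space and $M(\mathbf{A})$ is equicontinuous. In particular $A$ is compact Hausdorff, so we may invoke Theorem~\ref{theorem:arzela.ascoli} with $X = Y = A$: since $M(\mathbf{A}) \subseteq C(A,A)$ is equicontinuous, it is relatively compact in $C(A,A)$ with respect to the compact-open topology. Hence condition~(2) of the corollary holds. Conversely, assume $A$ is a Stone space and $M(\mathbf{A})$ is relatively compact in $C(A,A)$. Again $A$ is compact Hausdorff, so Theorem~\ref{theorem:arzela.ascoli} applies and yields that $M(\mathbf{A})$ is equicontinuous. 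Theorem~\ref{theorem:first.main.theorem} then gives that $\mathbf{A}$ is profinite, establishing condition~(1).

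The only point that needs a word of care is the hypothesis of Theorem~\ref{theorem:arzela.ascoli}, namely that $X$ and $Y$ be compact Hausdorff. In both directions this is available: being a Stone space, $A$ is by definition a totally disconnected compact Hausdorff space, so taking $X = Y = A$ is legitimate. No further argument is required, since equicontinuity of $M(\mathbf{A})$ relative to the uniformity of $A$ (as used in Theorem~\ref{theorem:first.main.theorem}) is exactly the notion of equicontinuity appearing in Theorem~\ref{theorem:arzela.ascoli} — recall from Proposition~\ref{proposition:compact.hausdorff.spaces}(1) that the compact Hausdorff space $A$ carries a unique uniformity inducing its topology, so there is no ambiguity in what ``equicontinuous'' means here.

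I do not anticipate any real obstacle; the corollary is a formal consequence of the two cited theorems. The proof is essentially a two-line chain of equivalences: profinite $\Longleftrightarrow$ ($A$ Stone space and $M(\mathbf{A})$ equicontinuous) $\Longleftrightarrow$ ($A$ Stone space and $M(\mathbf{A})$ relatively compact in $C(A,A)$), where the first equivalence is Theorem~\ref{theorem:first.main.theorem} and the second is Theorem~\ref{theorem:arzela.ascoli} applied with $X = Y = A$ (valid precisely because a Stone space is compact Hausdorff).
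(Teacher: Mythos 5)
Your proof is correct and follows exactly the paper's route: the paper likewise derives the corollary immediately from Theorem~\ref{theorem:first.main.theorem} and the Arzela--Ascoli theorem (Theorem~\ref{theorem:arzela.ascoli}), applied with $X = Y = A$ compact Hausdorff. Your remark on the uniqueness of the uniformity on $A$ (Proposition~\ref{proposition:compact.hausdorff.spaces}) is a sensible, if unstated in the paper, clarification.
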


Combining Corollary~\ref{corollary:first.main.theorem} and Proposition~\ref{proposition:second.main.theorem}, we finally obtain the following characterization of affinely bounded profinite algebras over compact signatures.

\begin{thm}\label{theorem:bounded.profinite.algebras} If $\Omega$ is a compact continuous signature and $\mathbf{A} = (A,E)$ is an affinely bounded topological $\Omega$-algebra, then the following are equivalent: \begin{enumerate}
	\item $\mathbf{A}$ is profinite.
	\item $A$ is a Stone space.
\end{enumerate} \end{thm}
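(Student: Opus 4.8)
The plan is to derive Theorem~\ref{theorem:bounded.profinite.algebras} as a direct consequence of Corollary~\ref{corollary:first.main.theorem} together with Proposition~\ref{proposition:second.main.theorem}. The implication (1)$\Longrightarrow$(2) is immediate and requires no affine boundedness: if $\mathbf{A}$ is profinite, then by Corollary~\ref{corollary:first.main.theorem} the space $A$ is a Stone space (in fact profiniteness alone forces $A$ to embed into a product of finite discrete spaces, hence to be compact, Hausdorff and totally disconnected). So the content is entirely in (2)$\Longrightarrow$(1).

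For (2)$\Longrightarrow$(1), assume $A$ is a Stone space, i.e.\ a totally disconnected compact Hausdorff space; in particular $A$ is compact. Since $\mathbf{A}$ is an affinely bounded topological algebra over the compact signature $\Omega$, Proposition~\ref{proposition:second.main.theorem} applies and yields that $M(\mathbf{A})$ is compact with respect to the compact-open topology on $C(A,A)$. A compact subset of $C(A,A)$ is in particular relatively compact in $C(A,A)$, so condition~(2) of Corollary~\ref{corollary:first.main.theorem} is satisfied: $A$ is a Stone space and $M(\mathbf{A})$ is relatively compact in $C(A,A)$. Invoking Corollary~\ref{corollary:first.main.theorem}, we conclude that $\mathbf{A}$ is profinite.

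There is essentially no obstacle here; the theorem is a clean synthesis of two previously established results. The only point deserving a word of care is that Corollary~\ref{corollary:first.main.theorem} (via Theorem~\ref{theorem:arzela.ascoli}) is phrased for compact Hausdorff $X$ and $Y$, which is exactly the situation we are in once we know $A$ is a Stone space: both the domain and codomain of the maps in $M(\mathbf{A}) \subseteq C(A,A)$ equal the compact Hausdorff space $A$. Hence all hypotheses line up and the proof is a one-line deduction. (If one wished to avoid the Arzel\`a--Ascoli detour altogether, one could instead feed the equicontinuity of $M(\mathbf{A})$ — which follows from compactness in the compact-open topology on a compact Hausdorff space — directly into Theorem~\ref{theorem:first.main.theorem}, obtaining the same conclusion.)
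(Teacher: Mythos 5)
Your proposal is correct and is exactly the paper's argument: the paper obtains Theorem~\ref{theorem:bounded.profinite.algebras} by combining Corollary~\ref{corollary:first.main.theorem} with Proposition~\ref{proposition:second.main.theorem}, just as you do (with (1)$\Rightarrow$(2) needing no affine boundedness and (2)$\Rightarrow$(1) using compactness of $M(\mathbf{A})$ in the compact-open topology).
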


\section{Simple topological algebras}\label{section:simple.topological.algebras}

In this section we apply the results established in the previous section in order to show that affinely bounded simple compact algebras satisfy a topological dichotomy -- they are either connected or finite (see Corollary~\ref{corollary:dichotomy}).  As would seem natural, we say that a Hausdorff topological algebra~$\mathbf{A}$ is \emph{simple} if every non-constant continuous homomorphism from~$\mathbf{A}$ into another Hausdorff topological algebra of the same type is injective. We start with a simple reformulation of this property for compact algebras.

\begin{prop}\label{proposition:simple.equivalence} Let~$\Omega$ be any continuous signature. A compact Hausdorff topological algebra~$\mathbf{A} = (A,E)$ of type~$\Omega$ is simple if and only if~$\Delta_{A}$ and $A \times A$ are the only closed congruences on~$\mathbf{A}$. \end{prop}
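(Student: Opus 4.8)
The plan is to establish the equivalence by relating closed congruences of $\mathbf{A}$ to continuous homomorphisms out of $\mathbf{A}$, using Lemma~\ref{lemma:topological.quotient.algebras} to pass between the two. For the direction from left to right, suppose $\mathbf{A}$ is simple and let $\theta$ be a closed congruence with $\theta \neq A \times A$; I want to show $\theta = \Delta_A$. Since $\theta$ is closed and $\mathbf{A}$ is compact, Lemma~\ref{lemma:topological.quotient.algebras} tells us that $\mathbf{A}/\theta = (B, E^\ast)$ is a topological $\Omega$-algebra, with $B = A/\theta$ a compact Hausdorff space, and the quotient map $\phi \colon \mathbf{A} \to \mathbf{A}/\theta$ is a continuous homomorphism into another Hausdorff topological algebra of the same type. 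Because $\theta \neq A \times A$, the quotient $B$ has at least two elements, so $\phi$ is non-constant. By simplicity $\phi$ is injective, hence $\theta = \ker\phi = \Delta_A$, as desired.

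For the converse, assume $\Delta_A$ and $A \times A$ are the only closed congruences, and let $\psi \colon \mathbf{A} \to \mathbf{C}$ be a non-constant continuous homomorphism into a Hausdorff topological algebra $\mathbf{C}$ of type $\Omega$. Then $\ker\psi$ is a congruence on $\mathbf{A}$, and it is closed in $A \times A$: indeed $\ker\psi = (\psi \times \psi)^{-1}(\Delta_C)$, and $\Delta_C$ is closed in $C \times C$ because $C$ is Hausdorff, while $\psi \times \psi$ is continuous. Since $\psi$ is non-constant, $\ker\psi \neq A \times A$, so by hypothesis $\ker\psi = \Delta_A$, which means $\psi$ is injective. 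Hence $\mathbf{A}$ is simple.

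The only subtle point is making sure Lemma~\ref{lemma:topological.quotient.algebras} applies cleanly: it requires $\theta$ to be a \emph{closed} congruence of a \emph{compact} algebra, and then delivers a genuine topological $\Omega$-algebra whose space is Hausdorff. This is exactly what is needed so that the quotient map is an admissible test homomorphism in the definition of simplicity. I expect the main (very mild) obstacle to be purely bookkeeping: checking that $A/\theta$ having more than one point precisely corresponds to $\theta \neq A \times A$, and that "non-constant" for a map into the quotient is equivalent to the quotient being non-trivial — both of which are immediate. No compactness or affine boundedness is needed here; the proposition is a general fact about compact Hausdorff topological algebras.
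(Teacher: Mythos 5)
Your proposal is correct and follows essentially the same route as the paper: one direction uses Lemma~\ref{lemma:topological.quotient.algebras} to turn a closed congruence into a Hausdorff quotient algebra and tests the quotient map against simplicity, the other observes that the kernel of a non-constant continuous homomorphism into a Hausdorff algebra is a closed congruence equal to $(\psi\times\psi)^{-1}(\Delta_C)$. The only cosmetic difference is that you split off the case $\theta\neq A\times A$ explicitly, whereas the paper reads the dichotomy directly from ``injective or constant''.
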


\begin{proof} ($\Longleftarrow$) Let $\phi \colon \mathbf{A} \to \mathbf{B}$ be a non-constant continuous homomorphism into a Hausdorff topological algebra $\mathbf{B}$ of type $\Omega$. Since $B$ is a Hausdorff space, $\Delta_{B}$ is a closed congruence on $B$. As $\phi$ is a continuous homomorphism, $\ker \phi = \{ (x,y) \in A \times A\mid \phi(x) = \phi(y) \} = (\phi \times \phi)^{-1}(\Delta_{B})$ is a closed congruence on $\mathbf{A}$. By assumption, $\ker \phi = \Delta_{A}$ or $\ker \phi = A \times A$. Since~$\phi$ is non-constant, $\phi$ must be injective. Note that this implication holds in general, i.e., without any compactness assumption.

($\Longrightarrow$) Let~$\theta$ be a closed congruence.  Then Lemma~\ref{lemma:topological.quotient.algebras} readily states that $\mathbf{A}/\theta$ is a Hausdorff topological algebra of type $\Omega$.  Of course, the quotient mapping $\phi \colon \mathbf{A} \to \mathbf{A}/\theta, \, a \mapsto [a]_{\theta}$ is a continuous homomorphism. Hence, simplicity of~$\mathbf{A}$ asserts that~$\phi$ is injective or constant. Consequently, $\theta = \Delta_{A}$ or $\theta = A \times A$. \end{proof}

To clarify some additional notation, let $X$ be a topological space.  For any point $x \in X$, we denote by $C(x)$ the connected component of $x$ in $X$, i.e., the union of all connected subsets of $X$ containing $x$.  Let us remark the following basic observation.

\begin{lem}\label{lemma:connected.relation} If $X$ is a compact Hausdorff topological space, then \begin{displaymath}
	\theta \defeq \{ (x,y) \in X \times X \mid C(x) = C(y) \}
\end{displaymath} is a closed equivalence relation on $X$. \end{lem}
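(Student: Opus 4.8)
The plan is to show that $\theta$ is reflexive, symmetric, transitive, and closed in $X \times X$. The first three are immediate: reflexivity holds because $C(x) = C(x)$; symmetry because equality is symmetric; transitivity because equality is transitive. So the only real content is the closedness of $\theta$ in the product space $X \times X$.

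To prove closedness, I would exploit the well-known description of connected components in a compact Hausdorff space: since $X$ is compact Hausdorff, it is a so-called \emph{Stone-type} situation for quasi-components, namely the connected component $C(x)$ coincides with the quasi-component of $x$, i.e., $C(x) = \bigcap \{ U \subseteq X \mid U \text{ clopen}, \, x \in U \}$. Consequently, two points $x, y$ satisfy $C(x) = C(y)$ if and only if they lie in exactly the same clopen subsets of $X$, i.e., $(x,y) \in \theta$ if and only if for every clopen $U \subseteq X$ one has $x \in U \Leftrightarrow y \in U$. This reformulation is the key step.

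Granting that reformulation, closedness is a short argument: I would write
\begin{displaymath}
	\theta = \bigcap_{U \text{ clopen}} \bigl( (U \times U) \cup ((X \setminus U) \times (X \setminus U)) \bigr),
\end{displaymath}
where the intersection ranges over all clopen subsets $U \subseteq X$. For each such $U$, the set $(U \times U) \cup ((X \setminus U) \times (X \setminus U))$ is a union of two products of clopen sets, hence clopen and in particular closed in $X \times X$. An arbitrary intersection of closed sets is closed, so $\theta$ is closed. Combined with the elementary verification that $\theta$ is an equivalence relation, this completes the proof.

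The main obstacle is the identification of connected components with quasi-components, i.e., the fact that $C(x) = \bigcap \{ U \text{ clopen} \mid x \in U \}$ for $X$ compact Hausdorff. This is a standard result (it can be found, e.g., in \cite{Bourbaki1}), and I would simply cite it rather than reprove it; the inclusion $C(x) \subseteq \bigcap \{ U \text{ clopen} \mid x \in U \}$ is trivial, while the reverse inclusion uses compactness crucially to show that the quasi-component is connected. Once this is invoked, everything else is routine set-theoretic bookkeeping, and the closedness follows cleanly from the displayed intersection formula above.
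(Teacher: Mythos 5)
Your proposal is correct and rests on the same key fact as the paper's proof, namely that in a compact Hausdorff space each connected component equals the intersection of the clopen sets containing the point (cited from Bourbaki). The only difference is presentational: the paper shows the complement of $\theta$ is open by exhibiting, for each $(x,y)\notin\theta$, a basic open neighbourhood $C\times(X\setminus C)$ disjoint from $\theta$, whereas you write $\theta$ directly as the intersection $\bigcap_{U\ \mathrm{clopen}}\bigl((U\times U)\cup((X\setminus U)\times(X\setminus U))\bigr)$ of closed sets -- two equivalent phrasings of the same argument.
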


\begin{proof} It is an elementary fact that $\{ C(x) \mid x \in X \}$ constitutes a partition of $X$ into closed, connected subsets.  In particular, $\theta$ is an equivalence relation on~$X$.  To prove that~$\theta$ is closed in $X \times X$, let us recall the following well-known fact (see~\cite[\S 2.4, Prop.~6]{Bourbaki1}): since~$X$ is a compact Hausdorff space, we have \begin{displaymath}
	C(x) = \bigcap \{ C \subseteq X \mid C \text{ clopen}, \, x \in C \}
\end{displaymath} for every $x \in X$. Now, if $(x,y) \in (X \times X)\setminus \theta$, then the equation above asserts the existence of a clopen subset $C \subseteq X$ with $x \in C$ and $y \notin C$, and thus $C \times (X \setminus C)$ is an open neighborhood of $(x,y)$ in $X \times X$ being contained in $(X \times X)\setminus \theta$. Hence, $\theta$ is closed in $X \times X$. \end{proof}

Combining the above with Theorem~\ref{theorem:bounded.profinite.algebras} we arrive at the following result.

\begin{cor}\label{corollary:dichotomy} Let~$\Omega$ be a compact signature and let~$\mathbf{A}$ be an affinely bounded compact Hausdorff topological algebra of type~$\Omega$.  If~$\mathbf{A}$ is simple, then~$\mathbf{A}$ is connected or finite.\end{cor}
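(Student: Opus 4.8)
The plan is to run the argument through the relation $\theta$ supplied by Lemma~\ref{lemma:connected.relation}: I would show that $\theta$ is a \emph{closed congruence} of $\mathbf{A}$, then use simplicity (via Proposition~\ref{proposition:simple.equivalence}) to conclude that $\theta$ is either $\Delta_{A}$ or $A \times A$, and finally analyse these two cases, the first giving connectedness and the second, via Theorem~\ref{theorem:bounded.profinite.algebras}, finiteness.

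First I would check that $\theta \defeq \{ (x,y) \in A \times A \mid C(x) = C(y) \}$ is translation-invariant. Every $f \in M(\mathbf{A})$ is a continuous self-map of $A$ (recall $M(\mathbf{A}) \subseteq C(A,A)$), hence sends connected sets to connected sets, so $f(C(x)) \subseteq C(f(x))$ for each $x \in A$. If $(x,y) \in \theta$, then $f(x)$ and $f(y)$ both belong to the connected set $f(C(x)) = f(C(y))$, whence $C(f(x)) = C(f(y))$, i.e.\ $(f(x),f(y)) \in \theta$. Since $\theta$ is a (closed) equivalence relation on $A$ by Lemma~\ref{lemma:connected.relation}, Lemma~\ref{lemma:congruences} now yields that $\theta$ is a congruence of $\mathbf{A}$. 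As $\mathbf{A}$ is compact Hausdorff and simple, Proposition~\ref{proposition:simple.equivalence} forces $\theta = \Delta_{A}$ or $\theta = A \times A$. In the case $\theta = A \times A$ all points of $A$ lie in one connected component, so $A$ is connected and we are done.

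Suppose instead $\theta = \Delta_{A}$; then every connected component of $A$ is a singleton, so $A$ is totally disconnected, and being also compact Hausdorff it is a Stone space. Now $\Omega$ is a compact signature and $\mathbf{A}$ is affinely bounded, so Theorem~\ref{theorem:bounded.profinite.algebras} applies and $\mathbf{A}$ is profinite; in particular $\mathbf{A}$ is residually finite. To deduce finiteness: if $|A| \le 1$ there is nothing to prove, so take distinct $x,y \in A$; residual finiteness provides a finite discrete $\Omega$-algebra $\mathbf{B}$ and a continuous homomorphism $\phi \colon \mathbf{A} \to \mathbf{B}$ with $\phi(x) \ne \phi(y)$, hence $\phi$ is non-constant, and simplicity of $\mathbf{A}$ makes $\phi$ injective. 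Therefore $|A| = |\phi(A)| \le |B| < \infty$, so $\mathbf{A}$ is finite.

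The continuity-and-connectedness bookkeeping is routine; the substantive input is concentrated in the totally disconnected case, where the step from ``Stone space'' to ``profinite'' is exactly Theorem~\ref{theorem:bounded.profinite.algebras} (which in turn rests on Proposition~\ref{proposition:second.main.theorem} and the Arzela--Ascoli argument behind Corollary~\ref{corollary:first.main.theorem}), and the subsequent step from ``profinite and simple'' to ``finite'' uses residual finiteness together with the definition of simplicity. I do not expect a genuine obstacle here: the one point worth isolating is that the ``same connected component'' relation is a congruence, after which the corollary is an assembly of results already established.
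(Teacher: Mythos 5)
Your proposal is correct and follows essentially the same route as the paper: the connected-component relation $\theta$ is a closed congruence, simplicity via Proposition~\ref{proposition:simple.equivalence} forces $\theta = A \times A$ (connected) or $\theta = \Delta_{A}$ (totally disconnected, hence profinite by Theorem~\ref{theorem:bounded.profinite.algebras}, hence finite). The only, entirely harmless, deviations are that you verify that $\theta$ is a congruence via translation invariance and Lemma~\ref{lemma:congruences}, whereas the paper checks compatibility with each basic operation $f_{\omega}$ directly, and that you spell out the ``profinite and simple implies finite'' step (separating homomorphism into a finite algebra is non-constant, hence injective) which the paper leaves implicit.
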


\begin{proof} Due to Lemma~\ref{lemma:connected.relation}, the equivalence relation $\theta \defeq \{ (x,y) \in A \times A \mid C(x) = C(y) \}$ is closed in $A \times A$.  Furthermore, it is straightforward to check that $\theta$ is a congruence on~$\mathbf{A}$.  We include a proof for the reader's convenience.  Let $n \in \mathbb{N}$ and $\omega \in \Omega_{n}$. Consider the continuous function $f_{\omega} \colon A^{n} \to A, \, x \mapsto E_{n}(\omega,x)$.  Now let $(a_{1},b_{1}),\ldots,(a_{n},b_{n}) \in \theta$.  Of course, $C(a_{1}) \times \ldots \times C(a_{n})$ is a connected subset of $A^{n}$.  By continuity of $f_{\omega}$, it follows that $f_{\omega}(C(a_{1}) \times \ldots \times C(a_{n}))$ is a connected subset of $A$.  Since $(b_{1},\ldots ,b_{n}) \in C(a_{1}) \times \ldots \times C(a_{n})$ and thus $f_{\omega}(b_{1},\ldots ,b_{n}) \in f_{\omega}(C(a_{1}) \times \ldots \times C(a_{n}))$, we conclude that \begin{displaymath}
	C(f_{\omega}(b_{1},\ldots ,b_{n})) = C(f_{\omega}(a_{1},\ldots ,a_{n})) ,
\end{displaymath} i.e., $(E_{n}(\omega,a_{1},\ldots ,a_{n}),E_{n}(\omega,b_{1},\ldots ,b_{n})) \in \theta$.  This shows that $\theta$ is a congruence on~$\mathbf{A}$.  Consequently, by Proposition~\ref{proposition:simple.equivalence}, simplicity of~$\mathbf{A}$ implies that $\theta = A \times A$ or $\theta = \Delta_{A}$.  The former just means that $A$ is connected.  Whereas in the latter case, $A$ is totally disconnected, and therefore~$\mathbf{A}$ is profinite by Theorem~\ref{theorem:bounded.profinite.algebras} and hence finite by simplicity again. \end{proof}

\section{Examples}\label{section:examples}

This section shall be concerned with examples of topological algebras that are not profinite despite their underlying topological spaces being Stone spaces.  The constructions we present are based on two different compactifications of a countable discrete space equipped with compatible unary operations.  The first example is taken from \cite{banaschewski-pua}.

\begin{exmpl}\label{example:stone.cech} Let $A = \beta \mathbb{N}$ be the Stone-\v{C}ech compactification of the natural numbers~$\mathbb{N}$ and let $f \colon A \to A$ be the continuous extension of the successor function $\mathbb{N} \to \mathbb{N}, \, n \mapsto n + 1$. We consider the continuous signature $\Omega$ where $\Omega_{1} \defeq \{ \varnothing \}$ and $\Omega_{n} \defeq \varnothing$ for all $n \in \mathbb{N}\setminus \{ 1 \}$. We obtain a topological $\Omega$-algebra $\mathbf{A} = (A,E)$ where $E_{1} \colon \{ \varnothing \} \times A \to A, \, (\varnothing,z) \mapsto f(z)$. Of course, $A$ is a totally disconnected compact Hausdorff space. However, $\mathbf{A}$ is not profinite, since $A$ is not metrizable and $\mathbf{A}$ admits only countably many open congruences, i.e., congruences of $\mathbf{A}$ being open in $A \times A$. To see the latter, consider the topological subalgebra $\mathbf{N}$ of $\mathbf{A}$ induced on the subuniverse $\mathbb{N}$. We argue that the map \begin{displaymath}
	\kappa \colon \{ \text{open congruences of } \mathbf{A} \} \to \{ \text{congruences of } \mathbf{N} \}, \quad \theta \mapsto \theta \cap (\mathbb{N} \times \mathbb{N})
\end{displaymath} is injective. For this purpose, let~$\theta_{0}$ and~$\theta_{1}$ be open congruences of~$\mathbf{A}$ with $\theta_{0} \nsubseteq \theta_{1}$. Since~$\theta_{1}$ is an open equivalence relation, $\theta_{1}$ is also closed in $A \times A$, and thus $\theta_{0}\setminus \theta_{1}$ is a nonempty open subset of $A \times A$. Therefore, density of $\mathbb{N}$ in $A$ implies that $(\theta_{0}\setminus \theta_{1}) \cap (\mathbb{N} \times \mathbb{N}) \ne \varnothing$ and hence $\kappa (\theta_{0}) \ne \kappa(\theta_{1})$. Furthermore, we observe that $\mathbf{N}$ has only countably many congruences: if $\theta$ is a congruence of $\mathbf{N}$, then either $\theta = \mathbb{N} \times \mathbb{N}$, or there exist $m,n \in \mathbb{N}$ with $m \ne n$ such that $\theta$ contains the congruence $\theta(m,n)$ of $\mathbf{N}$ generated by the pair $(m,n)$. Since $\mathbb{N}/\theta (m,n)$ is finite for any two distinct $m,n \in \mathbb{N}$, it follows that $\mathbf{N}$ has only countably many congruences. Hence, $\mathbf{A}$ admits only countably many open congruences. Since $\mathbf{A}$ is therefore not profinite, Theorem~\ref{theorem:first.main.theorem} implies that $M(\mathbf{A})$ is not equicontinuous. Indeed, as~$\mathbf{A}$ is not affinely bounded, Proposition~\ref{proposition:second.main.theorem} does not apply.

Let us note an observation concerning this example for a different choice of a signature. With regard to the continuous signature $\Omega^{\ast}$ with $\Omega^{\ast}_{1} \defeq \mathbb{N}$ and $\Omega^{\ast}_{n} \defeq \varnothing$ for $n \in \mathbb{N}\setminus \{ 1 \}$, we obtain a topological $\Omega^{\ast}$-algebra $\mathbf{B} = (A,E^{\ast})$ where $E_{1}^{\ast} \colon \mathbb{N} \times A \to A, \, (n,z) \mapsto f^{n}(z)$. Evidently, $M(\mathbf{B}) = M(\mathbf{A})$ and $\mathbf{B}$ is affinely bounded by $1$. However, Proposition~\ref{proposition:second.main.theorem} does not apply to $\mathbf{B}$ as the continuous signature $\Omega^{\ast}$ is not compact. \end{exmpl}

The subsequent example, which may be found in \cite{clark_et_al}, shows that even under the additional hypothesis of metrizability Stone topological algebras need not be profinite.

\begin{exmpl} Let $A = \alpha \mathbb{Z}$ be the Alexandroff compactification of the integers~$\mathbb{Z}$, i.e., the set $\mathbb{Z}_{\infty} = \mathbb{Z} \cup \{ \infty \}$ equipped with the topology\begin{equation*}
	\{ U \subseteq \mathbb{Z} \cup \{ \infty\} \mid \infty \in U \Rightarrow \mathbb{Z} \setminus U \text{ finite} \} .
\end{equation*} Consider the homeomorphism $f \colon A \to A$ given by $f(\infty ) = \infty$ and $f(z) \defeq z+1$ for $z \in \mathbb{Z}$. Again, we have a continuous signature $\Omega$, where $\Omega_{1} \defeq \{ \varnothing \}$ and $\Omega_{n} \defeq \varnothing$ for all $n \in \mathbb{N}\setminus \{ 1 \}$, and we get a topological $\Omega$-algebra $\mathbf{A} = (A,E)$ where $E_{1} \colon \{ \varnothing \} \times A \to A, \, (\varnothing,z) \mapsto f(z)$. Of course, $A$ is a metrizable, totally disconnected compact Hausdorff space. However, $\mathbf{A}$ is not residually finite, as every continuous homomorphism from $\mathbf{A}$ into a finite discrete $\Omega$-algebra is constant. Indeed, suppose $\phi \colon \mathbf{A} \to \mathbf{B}$ to be a continuous homomorphism into a finite discrete $\Omega$-algebra $\mathbf{B} = (B,E^{\ast})$. By continuity of $\phi$, there exists some $c \in \mathbb{Z}$ such that $\phi (x) = \phi (\infty)$ for all $x \in \mathbb{Z}$ with $x \leq c$. We show that $\phi (z) = \phi (\infty)$ for all $z \in \mathbb{Z}$. To this end, let $z \in \mathbb{Z}$. There exists $k \in \mathbb{N}$ such that $f^{-k}(z) \leq c$. Considering the function $g \colon B \to B, \, z \mapsto E^{\ast}(\varnothing,z)$, we conclude that \begin{displaymath}
	\phi (z) = \phi (f^{k}(f^{-k}(z))) = g^{k}(\phi (f^{-k}(z))) = g^{k}(\phi (\infty)) = \phi (f^{k}(\infty)) = \phi (\infty) .
\end{displaymath} Hence, $\phi$ is constant. It follows that $\mathbf{A}$ is not residually finite and therefore not profinite. Accordingly, Theorem~\ref{theorem:first.main.theorem} implies that $M(\mathbf{A})$ is not equicontinuous. Note again that Proposition~\ref{proposition:second.main.theorem} is not applicable, since $\mathbf{A}$ is not affinely bounded.

Similarly to Example~\ref{example:stone.cech}, when considering the continuous signature $\Omega^{\ast}$ given by $\Omega^{\ast}_{1} \defeq \mathbb{N}$ and $\Omega^{\ast}_{n} \defeq \varnothing$ for $n \in \mathbb{N}\setminus \{ 1 \}$, we obtain a topological $\Omega^{\ast}$-algebra $\mathbf{B} = (A,E^{\ast})$ by setting $E_{1}^{\ast} \colon \mathbb{N} \times A \to A, \, (n,z) \mapsto f^{n}(z)$. Likewise, $M(\mathbf{B}) = M(\mathbf{A})$ and $\mathbf{B}$ is affinely bounded by $1$. Again, Proposition~\ref{proposition:second.main.theorem} does not apply to $\mathbf{B}$ as the continuous signature $\Omega^{\ast}$ is not compact. \end{exmpl}

Finally, by modifying the latter example we present an infinite, totally disconnected, simple compact Hausdorff topological algebra.  This in particular illustrates that in Corollary~\ref{corollary:dichotomy} the assumption of affine boundedness cannot be dropped in general.

\begin{exmpl} Let $A = \alpha \mathbb{Z}$ be again the Alexandroff compactification of the integers~$\mathbb{Z}$. Consider the homeomorphism $f \colon A \to A$ given by $f(\infty ) = \infty$ and $f(z) \defeq z+1$ for $z \in \mathbb{Z}$. We obtain a continuous signature $\Omega$, where $\Omega_{1} \defeq \{ -1, 1 \}$ and $\Omega_{n} \defeq \varnothing$ for all $n \in \mathbb{N}\setminus \{ 1 \}$, and we get a topological $\Omega$-algebra $\mathbf{A} = (A,E)$ where $E_{1} \colon \{ -1, 1 \} \times A \to A, \, (\epsilon,z) \mapsto f^{\epsilon}(z)$. As before, $A$ is an infinite, totally disconnected, compact Hausdorff space. Moreover, $\mathbf{A}$ is simple. We shall prove this by applying Proposition~\ref{proposition:simple.equivalence}. Let $\theta$ be a closed congruence on $\mathbf{A}$. Suppose that $\theta \ne \Delta_{A}$. We show that $\theta = A \times A$.

First we prove that there exists $x \in \mathbb{Z}$ such that $(x,\infty) \in \theta$. By assumption, there exists a pair $(x,y) \in \theta \setminus \Delta_{A}$. Clearly, if $\infty \in \{ x,y \}$, then we are done with proving our claim. So, assume that $\{ x,y \} \subseteq \mathbb{Z}$. Consider $k \defeq y-x \in \mathbb{Z}\setminus \{ 0 \}$. By induction, it follows that $(f^{nk}(x),f^{(n+1)k}(x))$ for all $n \in \mathbb{N}$. Hence, $(x,f^{nk}(x)) \in \theta$ for all $n \in \mathbb{N}$. Since $\theta$ is closed in $A \times A$, we conclude that $(x,\infty) \in \theta$.

Now we prove that $\theta = A \times A$. As shown above, there is $x \in \mathbb{Z}$ with $(x,\infty) \in \theta$. It follows that $(z,\infty) = (f^{z-x}(x),f^{z-x}(\infty)) \in \theta$ for every $z \in \mathbb{Z}$. Thus, $\theta = A \times A$. This shows that $\mathbf{A}$ is simple. Proposition~\ref{proposition:second.main.theorem} does not apply, as $\mathbf{A}$ is not affinely bounded. \end{exmpl}

\section{Applications}\label{section:applications}

In Example~\ref{example:polynomial.boundedness} we have seen that groups and semigroups are instances of affinely bounded algebras (where the signature is finite).  We therefore re-establish from Theorem~\ref{theorem:bounded.profinite.algebras} the classical equivalence theorems by van Dantzig~\cite{VanDantzig} and Numakura~\cite{Numakura57}, namely that a topological group or topological semigroup, respectively, is profinite if and only if the underlying topological space is a totally disconnected compact Hausdorff space.  Rings and distributive lattices are easily seen to be affinely bounded as well (cf.~Example~\ref{example:semirings}), so that we also recover the corresponding equivalence theorems by Anzai~\cite{anzai} and Numakura~\cite{Numakura57}.

In this section we shall establish further examples for affinely bounded topological algebras, so that Theorem~\ref{theorem:bounded.profinite.algebras} can be applied. This will lead to the characterization of profiniteness for topological semirings (Proposition~\ref{proposition:profinite.semirings}), modules over compact semirings (Proposition~\ref{proposition:profinite.modules}), as well as distributive associative algebras in the sense of Choe~\cite{choe} (Corollary~\ref{corollary:profinite.choe.algebras}).

\begin{exmpl}\label{example:semirings} Let $\Omega = ( \Omega_n )_{n \in {\mathbb{N}}}$ with $\Omega_2 = \{ +, \cdot \}$ and $\Omega_i = \varnothing$ for $i \ne 2$, and let $\mathbf{R} = (R, +, \cdot)$ be a \emph{semiring}, i.e., $\mathbf{R}$ is an $\Omega$-algebra such that the binary operations~$+$ and~$\cdot$ on~$R$ are associative, whereas $+$ is commutative, and the distributive laws $x \cdot (y + z) = x \cdot y + x \cdot z$ and $(x + y) \cdot z = x \cdot z + y \cdot z$ hold for all $x, y, z \in R$.  Then the $\Omega$-algebra~$\mathbf{R}$ is affinely bounded by~$3$.  Indeed, for each affine $\Omega$-term $t \in L_{\Omega + R}(x)$ over~$R$ there exists $t' \in L_{\Omega + R}(x)$ with $\Psi_{\mathbf{R}}(t) = \Psi_{\mathbf{R}}(t')$ and $\h(t') \le 3$, in fact one can choose $t' = ((a \cdot x) \cdot b) + c$ (or $t' = (a \cdot x) + b$, $t' = (x \cdot a) + b$, $t' = x + a$, $t' = (a \cdot x) \cdot b$, $t' = a \cdot x$, $t' = x \cdot a$, or $t' = x$) for some $a, b, c \in R$. 

Analogously, one observes that if $\mathbf{R} = (R, +, \cdot, 0)$ is a ring, then the algebra~$\mathbf{R}$ is likewise affinely bounded by~$3$.
\end{exmpl}

Theorem~\ref{theorem:bounded.profinite.algebras} and Corollary~\ref{corollary:dichotomy} applied to Example~\ref{example:semirings} readily provide us with the following.

\begin{prop}\label{proposition:profinite.semirings} For any topological semiring $\mathbf{R} = (R, +, \cdot)$ the following hold: \begin{enumerate}
	\item $\mathbf{R}$ is profinite if and only if $R$ is a Stone space.
	\item If $\mathbf{R}$ is compact and simple, then $R$ is connected or finite.
\end{enumerate} \end{prop}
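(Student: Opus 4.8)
The plan is to derive both assertions directly from the results already assembled, using Example~\ref{example:semirings} as the bridge. A topological semiring $\mathbf{R} = (R,+,\cdot)$ is precisely a topological $\Omega$-algebra where $\Omega_2 = \{+,\cdot\}$ is a two-element discrete space and $\Omega_i = \varnothing$ otherwise; hence $\Omega$ is a finite, and in particular compact, continuous signature. Moreover Example~\ref{example:semirings} shows that the underlying abstract algebra of $\mathbf{R}$ is affinely bounded (by $3$). Thus $\mathbf{R}$ satisfies all the hypotheses of Theorem~\ref{theorem:bounded.profinite.algebras} and, in the compact Hausdorff case, of Corollary~\ref{corollary:dichotomy}.

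For part~(1), I would simply invoke Theorem~\ref{theorem:bounded.profinite.algebras}: since $\Omega$ is a compact continuous signature and $\mathbf{R}$ is an affinely bounded topological $\Omega$-algebra, $\mathbf{R}$ is profinite if and only if $R$ is a Stone space. One small point worth spelling out is that the algebraic axioms defining a semiring (associativity of $+$ and $\cdot$, commutativity of $+$, the two distributive laws) are exactly what Example~\ref{example:semirings} uses to establish affine boundedness, so no further verification is needed; one should just note that these identities are part of the definition of a topological semiring.

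For part~(2), assume $\mathbf{R}$ is compact and simple. A simple topological algebra is by definition Hausdorff, so $R$ is a compact Hausdorff space, and as just observed $\mathbf{R}$ is affinely bounded over the compact (finite) signature $\Omega$. Corollary~\ref{corollary:dichotomy} then applies verbatim and yields that $\mathbf{R}$ is connected or finite, i.e., $R$ is connected or finite.

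There is essentially no obstacle here: both parts are immediate specializations of the general theorems, and the only thing to check is that the definitional axioms of a (topological) semiring match the hypotheses used in Example~\ref{example:semirings}, which they do. If one wishes to be slightly more careful, the single routine remark to include is that the discrete two-point space $\{+,\cdot\}$ is compact, so that $\Omega$ qualifies as a compact continuous signature in the sense required by Theorem~\ref{theorem:bounded.profinite.algebras} and Corollary~\ref{corollary:dichotomy}. With that observation in place, the proof is a one-line appeal to each of those two results.
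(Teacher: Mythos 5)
Your proof is correct and is exactly the paper's argument: Example~\ref{example:semirings} gives affine boundedness (by $3$) over the finite, hence compact, signature $\{+,\cdot\}$, and then part~(1) follows from Theorem~\ref{theorem:bounded.profinite.algebras} and part~(2) from Corollary~\ref{corollary:dichotomy}. No further comment is needed.
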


Our next example is provided by the class of Boolean algebras.

\begin{exmpl} Let $\mathbf{A} = (A, \vee, \wedge, \neg, 0, 1)$ be a {\em Boolean algebra}, i.e., an $\Omega$-algebra, where $\Omega_0 = \{ 0, 1 \}$, $\Omega_1 = \{ \neg \}$, $\Omega_2 = \{ \vee, \wedge \}$, $\Omega_i = \varnothing$ for $i > 2$, and such that the binary operations~$\vee$ and~$\wedge$ are associative and commutative, are intertwined by the absorption laws and the distributive laws, and $a \vee 0 = a = a \wedge 1$, $a \vee (\neg a) = 1$, $a \wedge (\neg a) = 0$ hold for all $a \in A$.  Then the $\Omega$-algebra~$\mathbf{A}$ is affinely bounded by~$3$, since for every affine $\Omega$-term $t \in L_{\Omega+A}(x)$ over~$A$ there exists $t' \in L_{\Omega+A}(x)$ with $\Psi_{\mathbf{A}}(t) = \Psi_{\mathbf{A}}(t')$ and $\h(t') \le 3$, namely $t' = (x \vee a) \wedge b$ or $t' = ((\neg x) \vee a) \wedge b$ for some $a, b \in A$. \end{exmpl}

As a consequence we infer from Theorem~\ref{theorem:bounded.profinite.algebras} that a topological Boolean algebra is profinite if and only if its underlying space is a Stone space.  Note that this result also follows already from the characterization of profinite distributive lattices, since every lattice congruence of a Boolean algebra is in fact a congruence of the algebra.

The following class of examples illustrates in particular the benefit of considering compact continuous signatures in general instead of just finite ones.

\begin{exmpl}\label{example:semimodules} Let $\mathbf{R} = (R, +, \cdot)$ be a semiring and let $\Omega = ( \Omega_n )_{n \in \mathbb{N}}$ be the signature with $\Omega_1 = R$, $\Omega_2 = \{ + \}$, and $\Omega_i = \varnothing$ for $i \in \mathbb{N} \setminus \{ 1, 2 \}$.  A {\em semimodule} over~$\mathbf{R}$ is an $\Omega$-algebra $\mathbf{M} = (M, T, +)$, where $+$ is a commutative, associative binary operation on~$M$, and $T: R \times M \to M$, $(r, m) \mapsto T_r( m )$, is a map such that $T_r(x + y) = T_r(x) + T_r(y)$, $T_{r+s}(x) = T_r(x) + T_s(x)$, and $T_{r \cdot s}(x) = T_r( T_s( x ))$ hold for all $r, s \in R$ and $x, y \in M$.  The $\Omega$-algebra~$\mathbf{M}$ is affinely bounded by~$2$, since for each $t \in L_{\Omega + M}(x)$ there exists $t' \in L_{\Omega + M}(x)$ with $\Psi_{\mathbf{M}}(t) = \Psi_{\mathbf{M}}(t')$ and $\h(t') \le 2$, namely $t' = T_r(x) + a$ (or $t' = x + a$, $t' = T_r(x)$, or $t' = x$) for some $r \in R$ and $a \in M$. \end{exmpl}

When Theorem~\ref{theorem:bounded.profinite.algebras} and Corollary~\ref{corollary:dichotomy} are applied now to Example~\ref{example:semimodules}, we obtain our next result.

\begin{prop}\label{proposition:profinite.modules} Let $\mathbf{R} = (R, +, \cdot)$ be a compact topological semiring and let $\mathbf{M} = (M, T, +)$ be a topological semimodule over~$\mathbf{R}$.  Then the following hold: \begin{enumerate}
	\item $\mathbf{M}$ is profinite if and only if $M$ is a Stone space.
	\item If $\mathbf{M}$ is compact and simple, then $M$ is connected or finite.
\end{enumerate} \end{prop}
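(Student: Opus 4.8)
The strategy is to reduce the statement directly to Theorem~\ref{theorem:bounded.profinite.algebras} and Corollary~\ref{corollary:dichotomy}, exactly as was done for semirings in Proposition~\ref{proposition:profinite.semirings}. The essential ingredients are already in place: Example~\ref{example:semimodules} shows that the underlying abstract $\Omega$-algebra of a semimodule~$\mathbf{M} = (M, T, +)$ is affinely bounded (by~$2$), and the hypothesis that~$\mathbf{R}$ is a \emph{compact} topological semiring guarantees that the continuous signature $\Omega$ with $\Omega_1 = R$, $\Omega_2 = \{+\}$, $\Omega_i = \varnothing$ otherwise, is a compact signature. Hence $\mathbf{M}$ is an affinely bounded topological algebra over a compact continuous signature, and both Theorem~\ref{theorem:bounded.profinite.algebras} and Corollary~\ref{corollary:dichotomy} apply verbatim. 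For~(1), Theorem~\ref{theorem:bounded.profinite.algebras} gives that $\mathbf{M}$ is profinite if and only if $M$ is a Stone space. For~(2), if $\mathbf{M}$ is in addition compact and simple, Corollary~\ref{corollary:dichotomy} yields that $M$ is connected or finite.

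The only point that requires a moment's care is verifying that $\mathbf{M}$ is genuinely a topological $\Omega$-algebra in the sense of Section~\ref{section:polynomially.bounded.topological.algebras}, i.e.\ that the structure maps $E_n \colon \Omega_n \times M^n \to M$ are continuous. For $n = 2$ this is just continuity of $+\colon M \times M \to M$, which is part of the definition of a topological semimodule. For $n = 1$ the map in question is $E_1 \colon R \times M \to M$, $(r,m) \mapsto T_r(m)$, and continuity of this \emph{jointly} in both arguments is precisely what the definition of a topological semimodule over a topological semiring should encode (otherwise $\Omega_1 = R$ would not make $\mathbf{M}$ a topological $\Omega$-algebra); this is where the topology on~$R$ enters. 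So the proof is essentially a citation: once we note these continuity facts, $\mathbf{M}$ falls squarely under the hypotheses of the cited results.

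I do not anticipate a genuine obstacle here; the content was front-loaded into Example~\ref{example:semimodules} and the general theory of Sections~\ref{section:polynomially.bounded.topological.algebras}--\ref{section:simple.topological.algebras}. The one thing worth stating explicitly in the write-up is the chain of implications: compact topological semiring $\Rightarrow$ $\Omega$ is a compact signature; semimodule $\Rightarrow$ affinely bounded (Example~\ref{example:semimodules}); therefore Theorem~\ref{theorem:bounded.profinite.algebras} gives~(1) and Corollary~\ref{corollary:dichotomy} gives~(2). A full proof is thus no more than two or three sentences.

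\begin{proof} By Example~\ref{example:semimodules}, the $\Omega$-algebra underlying~$\mathbf{M}$ is affinely bounded. Since~$\mathbf{R}$ is a compact topological semiring, the continuous signature~$\Omega$ (with $\Omega_1 = R$, $\Omega_2 = \{+\}$, and $\Omega_i = \varnothing$ for $i \in \mathbb{N}\setminus\{1,2\}$) is compact. Moreover, $\mathbf{M}$ is a topological $\Omega$-algebra, as the operations $T \colon R \times M \to M$ and $+ \colon M \times M \to M$ are continuous. Hence Theorem~\ref{theorem:bounded.profinite.algebras} applies and yields~(1), and Corollary~\ref{corollary:dichotomy} applies and yields~(2). \end{proof}
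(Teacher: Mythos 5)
Your proof is correct and follows exactly the paper's route: the paper obtains this proposition by applying Theorem~\ref{theorem:bounded.profinite.algebras} and Corollary~\ref{corollary:dichotomy} to Example~\ref{example:semimodules}, with compactness of~$\mathbf{R}$ supplying the compactness of the continuous signature, which is precisely your argument.
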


We conclude this article with a more general class of affinely bounded algebras. First we shall briefly discuss associative algebras. For this purpose, we recall some terminology from \cite{choe}. Let $A$ be a set. A function $f \colon A \to A$ is called \emph{associative} if the transformation monoid generated by $f$ is finite, i.e., there exist $k \in \mathbb{N}$ and $l \in \mathbb{N}\setminus \{ 0 \}$ such that $f^{k+l} = f^{k}$. Furthermore, let $n \in \mathbb{N}\setminus \{ 0, 1 \}$. A function $f \colon A^{n} \to A$ is is said to be \emph{associative} if \begin{multline*}
	f(f(a_{1},\ldots,a_{n}),a_{n+1},\ldots,a_{2n-1}) = f(a_{1},f(a_{2},\ldots,a_{n+1}),a_{n+2},\ldots ,a_{2n-1}) \\
	= \ldots = f(a_{1},\ldots,a_{n-1},f(a_{n},\ldots,a_{2n-1}))
\end{multline*} for all $a_{1},\ldots,a_{2n-1} \in A$. Given a signature $\Omega$, we call an $\Omega$-algebra $\mathbf{A} = (A,E)$ \emph{associative} if the function $\omega^{\mathbf{A}} \colon A^{n} \to A$ is associative for every $\omega \in \Omega_{n}$ and $n \in \mathbb{N}\setminus \{ 0 \}$.

As the following example shows, associativity of an operation significantly affects the representation of the respective translation monoid.

\begin{exmpl}\label{example:associative.algebras} Let us fix some $n \in \mathbb{N} \setminus \{ 0 \}$. We define $\Omega \defeq (\Omega_{m})_{m \in \mathbb{N}}$ by $\Omega_{n} \defeq \{ \varnothing\}$ and $\Omega_{m} \defeq \varnothing $ for $m \in \mathbb{N}\setminus \{ n \}$. Let $\mathbf{A} = (A,E)$ be an associative algebra of type $\Omega$. We show that~$\mathbf{A}$ is affinely bounded. To this end, consider the function $f \colon A^{n} \to A, \, z \mapsto E(\varnothing,z)$. If $n= 1$, then $M(\mathbf{A})$ is finite and $\mathbf{A}$ is affinely bounded by $\vert M(\mathbf{A}) \vert -1$. Otherwise, if $n > 1$, then a straightforward induction reveals that \begin{multline*}
	M(\mathbf{A}) = \{ \id_{A} \} \cup \{ x \mapsto f(a,x,b) \mid a \in A^{k}, \, b \in A^{n-k-1}, \, k\in \{ 0,\ldots ,n-1 \} \} \\
	\cup \{ x \mapsto f(f(a,x),b) \mid a,b \in A^{n-1} \}
\end{multline*} (compare with Example~\ref{example:polynomial.boundedness}). In particular, it follows that $\mathbf{A}$ is affinely bounded by $2$. \end{exmpl}

Our next purpose is to extend the previous observation to distributive associative algebras. To this end, we shall first point out a useful general fact concerning translation monoids. Again we first address some notational issues. Let $\Omega$ be a signature and let $\mathbf{A} = (A,E)$ be an $\Omega$-algebra. If $\Omega' = (\Omega'_{n})_{n \in \mathbb{N}}$ is a family of subsets $\Omega'_{n} \subseteq \Omega_{n}$ ($n \in \mathbb{N}$), then $\Omega'$ itself constitutes a signature and we obtain an $\Omega'$-algebra $\mathbf{A}_{\Omega'} \defeq (A,E_{\Omega'})$ in a natural manner by restriction. Considering a subset $\Omega_{n}' \subseteq \Omega_{n}$ with $n \in \mathbb{N}$, we define $\mathbf{A}_{\Omega_{n}'} \defeq \mathbf{A}_{\Omega'}$ where $\Omega' \defeq (\Omega'_{n})_{n \in \mathbb{N}}$ with $\Omega'_{m} \defeq \varnothing$ for $m \in \mathbb{N}\setminus \{ n \}$. In particular, we define $\mathbf{A}_{\omega} \defeq \mathbf{A}_{\{ \omega \}}$ for a single symbol $\omega \in \Omega_{n}$ with $n \in \mathbb{N}$. Moreover, let $\Sigma$ be another signature. Assume that $\Omega$ and $\Sigma$ are disjoint, that is, $\Omega_{m} \cap \Sigma_{n} = \varnothing$ for all $m,n \in \mathbb{N}$. Then we shall consider the signature $\Omega + \Sigma \defeq ((\Omega + \Sigma)_{n})_{n \in \mathbb{N}}$ given by $(\Omega + \Sigma)_{n} \defeq \Omega_{n} \cup \Sigma_{n}$ for $n \in \mathbb{N}$.

With this notation, we note the following observation.

\begin{lem}\label{lemma:commuting.translations.monoids} Let $\Omega$ and $\Sigma$ be disjoint signatures. Let $\mathbf{A} = (A,E)$ be an $(\Omega + \Sigma)$-algebra such that $M(\mathbf{A}) = \{ f \circ g \mid f \in M(\mathbf{A}_{\Omega}), \, g \in M(\mathbf{A}_{\Sigma}) \}$. If $\mathbf{A}_{\Omega}$ is affinely bounded by $k \in \mathbb{N}$ and $\mathbf{A}_{\Sigma}$ is affinely bounded by $l \in \mathbb{N}$, then $\mathbf{A}$ is affinely bounded by $k+l$. \end{lem}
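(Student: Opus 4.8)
The plan is to combine the hypothesis $M(\mathbf{A}) = \{ f \circ g \mid f \in M(\mathbf{A}_{\Omega}), \, g \in M(\mathbf{A}_{\Sigma}) \}$ with the affine-boundedness witnesses for the two subreducts, and to show that composing an affine $\Omega$-term of height at most $k$ with an affine $\Sigma$-term of height at most $l$ yields an affine $(\Omega+\Sigma)$-term whose arity and height are both bounded by $k+l$. First I would fix an arbitrary $t \in L_{\Omega+\Sigma+A}^{\times}(x)$ and set $h \defeq \Psi_{\mathbf{A}}(t) \in M(\mathbf{A})$. By the hypothesis, $h = f \circ g$ for some $f \in M(\mathbf{A}_{\Omega})$ and $g \in M(\mathbf{A}_{\Sigma})$. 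Here one must be slightly careful: the translation monoid $M(\mathbf{A}_{\Omega})$ is computed over the algebra $\mathbf{A}_{\Omega} = (A, E_{\Omega})$, which has the \emph{same} carrier $A$, so affine $\Omega$-terms over $A$ make sense and $\Psi_{\mathbf{A}_{\Omega}}$ agrees with $\Psi_{\mathbf{A}}$ restricted to $L_{\Omega+A}^{\times}(x) \subseteq L_{\Omega+\Sigma+A}^{\times}(x)$; likewise for $\Sigma$. So by affine boundedness of $\mathbf{A}_{\Omega}$ by $k$ (applied via Definition~\ref{definition:polynomial.boundedness} and Remark~\ref{remark:polynomial.boundedness}) there is $u \in L_{\Omega+A}^{\times}(x)$ with $\Psi_{\mathbf{A}_{\Omega}}(u) = f$, $\ar(u) \leq k$, $\h(u) \leq k$, and by affine boundedness of $\mathbf{A}_{\Sigma}$ by $l$ there is $v \in L_{\Sigma+A}^{\times}(x)$ with $\Psi_{\mathbf{A}_{\Sigma}}(v) = g$, $\ar(v) \leq l$, $\h(v) \leq l$.

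Next I would form the composite term $w \defeq u \cdot v \in L_{\Omega+\Sigma+A}^{\times}(x)$, using the concatenation-along-$x$ operation on $L_{\Omega+\Sigma+A}(x)$ introduced in Section~\ref{section:polynomial.boundedness} (note $u$ and $v$ are both linear $(\Omega+\Sigma)$-polynomials over $A$, and $u \notin L_{\ldots}(\varnothing)$, so the product lies in $L_{\Omega+\Sigma+A}^{\times}(x)$). The key algebraic identity, provable by a routine induction on the structure of $u$, is that $\Psi_{\mathbf{A}}(u \cdot v) = \Psi_{\mathbf{A}}(u) \circ \Psi_{\mathbf{A}}(v)$ — this is exactly the statement that $\Psi_{\mathbf{A}}$ is a monoid homomorphism $(L_{\Omega+\Sigma+A}^{\times}(x), \cdot) \to (M(\mathbf{A}), \circ)$, already recorded in the excerpt right after the Proposition. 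Since $u$ only involves operation symbols from $\Omega$ and elements of $A$, evaluation $\Psi_{\mathbf{A}}(u)$ coincides with $\Psi_{\mathbf{A}_{\Omega}}(u) = f$; similarly $\Psi_{\mathbf{A}}(v) = g$. Hence $\Psi_{\mathbf{A}}(w) = f \circ g = h = \Psi_{\mathbf{A}}(t)$, as desired.

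It remains to bound $\ar(w)$ and $\h(w)$. For the height: one shows by induction on $u$ that $\h(u \cdot v) \leq \h(u) + \h(v)$. In the base case $u = x$ we have $u \cdot v = v$ so $\h(v) \le \h(v) \le \h(u)+\h(v)$; in the base case $u = a$ with $a \in A$ we have $u \cdot v = a$, height $0$; in the inductive step $u = \omega u_1 \ldots u_n$ we get $u \cdot v = \omega (u_1 \cdot v) \ldots (u_n \cdot v)$, so $\h(u \cdot v) = 1 + \sup_i \h(u_i \cdot v) \leq 1 + \sup_i(\h(u_i) + \h(v)) = (1 + \sup_i \h(u_i)) + \h(v) = \h(u) + \h(v)$. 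Thus $\h(w) \leq k + l$. For the arity: the same induction gives $\ar(u \cdot v) \leq \max(\ar(u), \ar(v))$, since arity only tracks the largest operation-symbol arity appearing, and $u \cdot v$ uses precisely the symbols appearing in $u$ together with those appearing in $v$ (in the inductive step, $\ar(\omega(u_1\cdot v)\ldots(u_n\cdot v)) = \sup(\{\ar(u_i \cdot v)\} \cup \{n\}) \le \sup(\{\max(\ar(u_i),\ar(v))\} \cup \{n\}) = \max(\sup(\{\ar(u_i)\}\cup\{n\}), \ar(v)) = \max(\ar(u),\ar(v))$). Hence $\ar(w) \leq \max(k,l) \leq k+l$. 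This yields $M(\mathbf{A}) \subseteq \{ \Psi_{\mathbf{A}}(t') \mid t' \in L_{\Omega+\Sigma+A}^{\times}(x), \ar(t') \leq k+l, \h(t') \leq k+l \}$; the reverse inclusion is immediate from the Proposition preceding Definition~\ref{definition:polynomial.boundedness}, so $\mathbf{A}$ is affinely bounded by $k+l$. The only mild subtlety — and the step I would be most careful about — is the bookkeeping that $\Psi_{\mathbf{A}}$ restricted to terms over a subsignature computes the translation monoid of the corresponding reduct, i.e., identifying $\Psi_{\mathbf{A}_{\Omega}}$ with $\Psi_{\mathbf{A}}|_{L_{\Omega+A}^{\times}(x)}$; everything else is a routine term induction.
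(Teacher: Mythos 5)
Your proposal is correct and follows essentially the same route as the paper: write each element of $M(\mathbf{A})$ as $f \circ g$, represent $f$ and $g$ by bounded terms $u \in L_{\Omega+A}^{\times}(x)$, $v \in L_{\Sigma+A}^{\times}(x)$, and use the fact that $\Psi_{\mathbf{A}}$ is a monoid homomorphism to realize $f\circ g = \Psi_{\mathbf{A}}(u\cdot v)$ with $\h(u\cdot v)\leq k+l$ and $\ar(u\cdot v)\leq\max(k,l)$. The only difference is that you spell out the term inductions for the height/arity bounds and the identification $\Psi_{\mathbf{A}_{\Omega}} = \Psi_{\mathbf{A}}|_{L_{\Omega+A}^{\times}(x)}$, which the paper leaves implicit.
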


\begin{proof} For the sake of brevity, let us define $S \defeq \{ s \in L_{\Omega + A}^{\times}(x) \mid \h (s) \leq k, \, \ar (s) \leq k \}$ and $T \defeq \{ t \in L_{\Sigma + A}^{\times}(x) \mid \h (t) \leq l, \, \ar (t) \leq l \}$. Since $\Psi_{\mathbf{A}} \colon (L_{\Omega + \Sigma +A}^{\times}(x), \cdot) \to (M(\mathbf{A}), \circ)$ is a monoid homomorphism, we conclude that \begin{align*}
	M(\mathbf{A}) &= \{ f \circ g \mid f \in M(\mathbf{A}_{\Omega}), \, g \in M(\mathbf{A}_{\Sigma}) \} = \{ \Psi_{\mathbf{A}_{\Omega}}(s) \circ \Psi_{\mathbf{A}_{\Sigma}}(t) \mid s \in S, \, t \in T \} \\
	&= \{ \Psi_{\mathbf{A}}(s) \circ \Psi_{\mathbf{A}}(t) \mid s \in S, \, t \in T \} = \{ \Psi_{\mathbf{A}}(s \cdot t) \mid s \in S, \, t \in T \} \\
	&\subseteq \{ \Psi_{\mathbf{A}}(t) \mid t \in L_{\Omega + \Sigma + A}^{\times}(x), \, \h (t) \leq k+l , \, \ar (t) \leq \max (k,l) \} ,
\end{align*} which implies that $\mathbf{A}$ is affinely bounded by $k+l$. \end{proof}

Now we come to distributive algebras. For this purpose, we need to recall some additional terminology from \cite{choe}. Let $A$ be a set and let $f \colon A^{n} \to A$ be a function where $n \in \mathbb{N} \setminus \{ 0 \}$. We say that a function $h \colon A \to A$ \emph{distributes} over $f$ if \begin{displaymath}
	h(f(a_{1},\ldots,a_{n})) = f(h(a_{1}),\ldots,h(a_{n}))
\end{displaymath} for all $a_{1},\ldots,a_{n} \in A$, or if there exists $k \in \{ 1,\ldots,n \}$ such that \begin{displaymath}
	h(f(a_{1},\ldots,a_{n})) = f(a_{1},\ldots,a_{k-1},h(a_{k}),a_{k+1},\ldots,a_{n})
\end{displaymath} for all $a_{1},\ldots,a_{n} \in A$. Note that if $n = 1$, then~$h$ distributes over~$f$ if and only if~$h$ and~$f$ commute. Furthermore, a function $g \colon A^{m} \to A$ where $m \in \mathbb{N}\setminus \{ 0,1 \}$ \emph{distributes} over~$f$ if \begin{multline*}
	g(b_{1},\ldots,b_{k-1},f(a_{1},\ldots,a_{n}),b_{k+1},\ldots,b_{m}) \\
	= f(g(b_{1},\ldots,b_{k-1},a_{1},b_{k+1},\ldots,b_{m}), \ldots, g(b_{1},\ldots,b_{k-1},a_{n},b_{k+1},\ldots,b_{m}))
\end{multline*} whenever $k \in \{ 1,\ldots,m \}$, $b_{1},\ldots,b_{k-1},b_{k+1},\ldots,b_{m} \in A$, and $a_{1},\ldots,a_{n} \in A$.

From the definitions above one easily deduces the subsequent lemma.

\begin{lem}\label{lemma:distributive.algebras} Let $\Sigma$ and $\Pi$ be disjoint signatures and let $\mathbf{A} = (A,E)$ be a $(\Sigma + \Pi)$-algebra. Suppose that, for all $\sigma \in \Sigma_{n}$ and $\pi \in \Pi_{m}$ with $n, m \geq 1$, the function $\pi^{\mathbf{A}}$ distributes over the function $\sigma^{\mathbf{A}}$. Then \begin{displaymath}
	M(\mathbf{A}) = \{ f \circ g \mid f \in M(\mathbf{A}_{\Sigma}), \, g \in M(\mathbf{A}_{\Pi}) \} .
\end{displaymath} \end{lem}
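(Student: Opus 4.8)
The plan is to prove the non-trivial inclusion $M(\mathbf{A}) \subseteq \{f \circ g \mid f \in M(\mathbf{A}_{\Sigma}),\, g \in M(\mathbf{A}_{\Pi})\}$; the reverse inclusion is immediate since $M(\mathbf{A}_{\Sigma})$ and $M(\mathbf{A}_{\Pi})$ are both submonoids of $M(\mathbf{A})$ and the right-hand side contains every single translation (by taking one factor to be $\id_A$) as well as being closed under the relevant products once we have the other inclusion. Write $R \defeq \{f \circ g \mid f \in M(\mathbf{A}_{\Sigma}),\, g \in M(\mathbf{A}_{\Pi})\}$. Since $M(\mathbf{A})$ is generated as a monoid by the translations of $\mathbf{A}$, and since $\id_A \in R$, it suffices to show that $R$ is closed under left multiplication by an arbitrary translation $h$ of $\mathbf{A}$: indeed, every translation of $\mathbf{A}$ is either a translation of $\mathbf{A}_{\Sigma}$ (built from some $\sigma \in \Sigma_n$) or a translation of $\mathbf{A}_{\Pi}$ (built from some $\pi \in \Pi_m$), so $R$ contains all translations, and closure under left multiplication by translations then gives $M(\mathbf{A}) \subseteq R$.

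So fix $r = f \circ g \in R$ with $f \in M(\mathbf{A}_{\Sigma})$, $g \in M(\mathbf{A}_{\Pi})$, and let $h$ be a translation of $\mathbf{A}$; I must show $h \circ f \circ g \in R$. If $h$ is a $\Pi$-translation, then $h \circ f$ is the problematic part: here I would use the distributivity hypothesis. Write $f$ as a composite $f = f_1 \circ \dots \circ f_p$ of $\Sigma$-translations; it is enough to handle $h \circ f_1$ where $f_1 \colon x \mapsto \sigma^{\mathbf{A}}(a_1,\dots,a_{i-1},x,a_{i+1},\dots,a_n)$ for some $\sigma \in \Sigma_n$ and then iterate. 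By hypothesis $\pi^{\mathbf{A}}$ (the operation underlying $h$) distributes over $\sigma^{\mathbf{A}}$, which means either $h \circ \sigma^{\mathbf{A}} = \sigma^{\mathbf{A}} \circ (h \times \dots \times h)$ or $h$ applied to the $k$-th coordinate equals $\sigma^{\mathbf{A}}$ with $h$ applied componentwise. In the first case $h \circ f_1 = \sigma^{\mathbf{A}}(h(a_1),\dots,h(a_{i-1}),h(x),\dots,h(a_n))$, i.e. $h \circ f_1 = f_1' \circ h$ where $f_1'$ is again a $\Sigma$-translation (with the $a_j$ replaced by $h(a_j)$) — so $h$ has moved to the right past one $\Sigma$-translation, at the cost of replacing the constants. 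In the second ("$k$-th coordinate") case one gets $h \circ f_1 = \sigma^{\mathbf{A}}(a_1,\dots,h(x),\dots)$ if $k = i$, which is $f_1'' \circ h$ again, or if $k \ne i$ then $h \circ f_1$ is actually a constant map (since then $h(f_1(x))$ does not depend on $x$), and constant maps... hmm — a constant map need not lie in $R$ in general, so I must be slightly careful and instead observe that this "$k \ne i$" subcase cannot produce something outside $R$ because we can absorb it: actually the cleanest route is to note $h \circ f_1 = \sigma^{\mathbf{A}}(a_1,\dots,a_{k-1},h(a_k),a_{k+1},\dots)$ evaluated with $a_i = x$, which when $k\ne i$ is a $\Sigma$-translation in the variable $x$ with one constant slot changed — so in all subcases $h \circ f_1 = f_1' \circ (\text{either } h \text{ or } \id_A)$ with $f_1'$ a $\Sigma$-translation. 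Iterating through $f_2,\dots,f_p$ pushes $h$ all the way to the right, yielding $h \circ f = \tilde f \circ h'$ with $\tilde f \in M(\mathbf{A}_{\Sigma})$ and $h' \in \{h, \id_A\} \subseteq M(\mathbf{A}_{\Pi})$, hence $h \circ f \circ g = \tilde f \circ (h' \circ g) \in R$.

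The symmetric case, where $h$ is a $\Sigma$-translation, is trivial: then $h \circ f \in M(\mathbf{A}_{\Sigma})$ directly, so $h \circ (f \circ g) = (h\circ f)\circ g \in R$ with no distributivity needed. Combining the two cases gives that $R$ is closed under left multiplication by every translation of $\mathbf{A}$, and since $R$ contains $\id_A$ and every translation, induction on word length in the generators yields $M(\mathbf{A}) \subseteq R$, completing the proof.

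I expect the main obstacle to be the bookkeeping in the "push $h$ to the right" argument: one has to unwind the definition of "distributes over" carefully in its several forms (the fully componentwise version, the single-coordinate version for $m \ge 2$, and the commutation version for $m = 1$), and handle the annoying subcase in which pushing a $\Pi$-translation past a $\Sigma$-translation turns it into the identity (or a trivially reindexed translation) on the relevant slot, making sure no genuinely new map outside $R$ is ever created. Everything else — the reverse inclusion, the reduction to closure under left multiplication by a single translation, the decomposition of $f$ into elementary $\Sigma$-translations — is routine.
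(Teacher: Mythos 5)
Your argument is correct and follows essentially the same route as the paper: the key point in both is that a $\Pi$-translation $g$ and a $\Sigma$-translation $f$ satisfy $g \circ f = \tilde f \circ g$ or $g \circ f = \tilde f$ for some $\Sigma$-translation $\tilde f$ (your self-correction of the ``constant map'' slip in the $k \ne i$ subcase is exactly right), and one then pushes $\Pi$-translations to the right through words in the generators.
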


\begin{proof} By assumption, we have the following: for every translation~$f$ of $\mathbf{A}_{\Sigma}$ and every translation~$g$ of $\mathbf{A}_{\Pi}$, there is a translation~$\tilde{f}$ of $\mathbf{A}_{\Sigma}$ such that $g \circ f = \tilde{f} \circ g$ or $g \circ f = \tilde{f}$. We conclude that for all $f \in M(\mathbf{A}_{\Sigma})$ and $g \in M(\mathbf{A}_{\Pi})$ there exists $\tilde{f} \in M(\mathbf{A}_{\Sigma})$ and $\tilde{g} \in M(\mathbf{A}_{\Pi})$ such that $g \circ f = \tilde{f} \circ \tilde{g}$. This readily implies the desired equation. \end{proof}



Utilizing the previous two lemmata, we finally re-establish a result of \cite{choe}. For this purpose, let us briefly agree on some additional terminology. By a \emph{Choe-signature} we mean a pair $(\Omega,{\preceq})$ consisting of a signature $\Omega$ where $\bigcup \{ \Omega_{n} \mid n \in \mathbb{N} \}$ is finite and a linear order $\preceq$ on the set $\bigcup \{ \Omega_{n} \mid n \in \mathbb{N}\setminus \{ 0,1 \} \}$. Given a Choe-signature $(\Omega,\preceq)$, an $\Omega$-algebra $\mathbf{A} = (A,E)$ is said to be \emph{distributive} with respect to $\preceq$ if each of the following conditions is satisfied: \begin{enumerate}
	\item if $\sigma \in \Omega_{n}$, $\pi \in \Omega_{m}$, $n,m \geq 1$, and $\sigma \prec \pi$, then $\pi^{\mathbf{A}}$ distributes over $\sigma^{\mathbf{A}}$.
	\item if $\sigma \in \Omega_{n}$, $n \geq 1$, and $\omega \in \Omega_{1}$, then $\omega^{\mathbf{A}}$ distributes over $\sigma^{\mathbf{A}}$.
\end{enumerate} The subsequent result enables us to deduce the main result of \cite{choe} (statement (1) of our Corollary~\ref{corollary:profinite.choe.algebras}) as an immediate consequence of Theorem~\ref{theorem:bounded.profinite.algebras}.

\begin{prop}\label{proposition:associative.distributive.algebras} Let $(\Omega,{\preceq})$ be a Choe-signature. Suppose the $\Omega$-algebra $\mathbf{A} = (A,E)$ to be associative and distributive with respect to $\preceq$. Then $\mathbf{A}$ is affinely bounded by \begin{displaymath}
	2 \sum_{n \geq 2} \vert \Omega_{n} \vert + \sum_{\omega \in \Omega_{1}} \vert M(\mathbf{A}_{\omega}) \vert - \vert \Omega_{1} \vert .
\end{displaymath} \end{prop}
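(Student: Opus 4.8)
The strategy is to split the signature into its pieces by arity and degree of complication, apply the two combining lemmas (Lemma~\ref{lemma:commuting.translations.monoids} and Lemma~\ref{lemma:distributive.algebras}) repeatedly, and use the already-established affine boundedness bounds for the atomic pieces (Example~\ref{example:associative.algebras} for a single associative operation). Concretely, let $\Omega_{\geq 2} \defeq (\Omega_n)_{n \geq 2}$ collect all operations of arity at least two, and $\Omega_1$ the unary ones; thus $\Omega = \Omega_{\geq 2} + \Omega_1'$ where $\Omega_1'$ is the signature with $(\Omega_1')_1 = \Omega_1$ and all other slots empty. Since $\mathbf{A}$ is distributive with respect to $\preceq$, condition~(2) says every $\omega^{\mathbf{A}}$ with $\omega \in \Omega_1$ distributes over every $\sigma^{\mathbf{A}}$ with $\sigma$ of arity $\geq 1$; in particular over every operation in $\Omega_{\geq 2}$. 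Hence Lemma~\ref{lemma:distributive.algebras} (applied with $\Sigma = \Omega_{\geq 2}$ and $\Pi = \Omega_1'$) yields $M(\mathbf{A}) = \{ f \circ g \mid f \in M(\mathbf{A}_{\Omega_{\geq 2}}),\, g \in M(\mathbf{A}_{\Omega_1'}) \}$. It then remains to bound each factor separately and invoke Lemma~\ref{lemma:commuting.translations.monoids} to add the bounds.

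For the unary part $\mathbf{A}_{\Omega_1'}$: since the $\omega^{\mathbf{A}}$ ($\omega \in \Omega_1$) are associative as unary functions, each generates a finite transformation monoid, and — because distinct unary operations need not commute in general — one must be a little careful. However, I would iterate Lemma~\ref{lemma:distributive.algebras} once more inside $\Omega_1$: condition~(2) with $n = m = 1$ says that for $\sigma, \omega \in \Omega_1$, $\omega^{\mathbf{A}}$ distributes over (i.e.\ commutes with) $\sigma^{\mathbf{A}}$. So the unary operations pairwise commute, and enumerating $\Omega_1 = \{ \omega_1, \dots, \omega_r \}$ one gets $M(\mathbf{A}_{\Omega_1'}) = \{ g_1 \circ \cdots \circ g_r \mid g_i \in M(\mathbf{A}_{\omega_i}) \}$ by induction via Lemma~\ref{lemma:distributive.algebras}. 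Since $\mathbf{A}_{\omega_i}$ is affinely bounded by $|M(\mathbf{A}_{\omega_i})| - 1$ (the $n=1$ case of Example~\ref{example:associative.algebras}), repeated application of Lemma~\ref{lemma:commuting.translations.monoids} shows $\mathbf{A}_{\Omega_1'}$ is affinely bounded by $\sum_{\omega \in \Omega_1} |M(\mathbf{A}_\omega)| - |\Omega_1|$.

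For the higher-arity part $\mathbf{A}_{\Omega_{\geq 2}}$: enumerate all operations of arity $\geq 2$ as $\pi_1 \prec \pi_2 \prec \cdots \prec \pi_s$ according to the linear order $\preceq$, so that $s = \sum_{n \geq 2} |\Omega_n|$. For each $j$, let $\mathbf{A}_{\pi_j}$ be the one-operation reduct; by Example~\ref{example:associative.algebras} it is affinely bounded by $2$. Now peel off $\pi_s$ (the $\preceq$-largest): condition~(1) says $\pi_s^{\mathbf{A}}$ distributes over $\sigma^{\mathbf{A}}$ for every $\sigma \prec \pi_s$ of arity $\geq 1$, so Lemma~\ref{lemma:distributive.algebras} with $\Sigma = \{\pi_1,\dots,\pi_{s-1}\}$ and $\Pi = \{\pi_s\}$ gives a factorisation $M(\mathbf{A}_{\Omega_{\geq 2}}) = \{ f \circ g \mid f \in M(\mathbf{A}_{\{\pi_1,\dots,\pi_{s-1}\}}),\, g \in M(\mathbf{A}_{\pi_s}) \}$; then Lemma~\ref{lemma:commuting.translations.monoids} adds $2$ to whatever bound the first $s-1$ operations have. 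Iterating downwards gives that $\mathbf{A}_{\Omega_{\geq 2}}$ is affinely bounded by $2s = 2\sum_{n \geq 2} |\Omega_n|$. Finally, combining the two halves through Lemma~\ref{lemma:commuting.translations.monoids} one more time — the hypothesis $M(\mathbf{A}) = \{ f \circ g \mid f \in M(\mathbf{A}_{\Omega_{\geq 2}}),\, g \in M(\mathbf{A}_{\Omega_1'}) \}$ being exactly what we verified at the outset — yields that $\mathbf{A}$ is affinely bounded by $2\sum_{n \geq 2}|\Omega_n| + \sum_{\omega \in \Omega_1}|M(\mathbf{A}_\omega)| - |\Omega_1|$, as claimed. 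The main subtlety to watch is the bookkeeping of arities in Lemma~\ref{lemma:commuting.translations.monoids}: each application bounds $\ar$ by a maximum rather than a sum, and one must check at each stage that the arity bound never exceeds the height bound, so that the "$\ar \leq m,\ \h \leq m$" form of affine boundedness genuinely propagates; this is routine but is the one place where the induction could break if phrased carelessly.
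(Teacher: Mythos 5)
Your proposal is correct and follows essentially the same route as the paper: split off the unary operations via condition~(2) and Lemma~\ref{lemma:distributive.algebras}, factor the arity-$\geq 2$ part along the linear order $\preceq$ via condition~(1), bound each single-operation reduct by Example~\ref{example:associative.algebras}, and add the bounds by iterating Lemma~\ref{lemma:commuting.translations.monoids}. The arity-bookkeeping caveat you raise at the end is a fair point, but it is equally present (and left implicit) in the paper's own argument, since Lemma~\ref{lemma:commuting.translations.monoids} controls $\ar$ by a maximum and the single-operation bounds of Example~\ref{example:associative.algebras} are really height bounds, so it does not distinguish your proof from theirs.
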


\begin{proof} Let $\Sigma \defeq (\Sigma_{n})_{n \in \mathbb{N}}$ where $\Sigma_{1} \defeq \Sigma_{0} \defeq \varnothing$ and $\Sigma_{n} \defeq \Omega_{n}$ for $n \geq 2$. Since $\omega^{\mathbf{A}}$ distributes over $\sigma^{\mathbf{A}}$ for each $\omega \in \Omega_1$ and each $\sigma \in \Sigma$, by applying Lemma~\ref{lemma:distributive.algebras} we observe that \begin{align*}
	M(\mathbf{A}) = \{ f \circ g \mid f \in M(\mathbf{A}_{\Sigma}), \, g \in M(\mathbf{A}_{\Omega_{1}}) \} .
\end{align*} 
Let us define $\Delta \defeq \bigcup \{ \Omega_{n} \mid n \geq 2 \}$ and $t \defeq \vert \Delta \vert$. As $(\Delta, \preceq)$ is linearly ordered, there is an order-isomorphism $\kappa \colon (\{ 1,\ldots,t \},{\leq}) \to (\Delta,{\preceq})$. Since $\mathbf{A}$ is distributive with respect to~$\preceq$, Lemma~\ref{lemma:distributive.algebras} asserts that \begin{displaymath}
	M(\mathbf{A}_{\Sigma}) = \{ f_{1} \circ \ldots \circ f_{t} \mid f_{1} \in M(\mathbf{A}_{\kappa (1)}), \ldots, f_{t} \in M(\mathbf{A}_{\kappa (t)}) \} .
\end{displaymath} Hence, $\mathbf{A}_{\Sigma}$ is affinely bounded by $2 t = 2 \sum_{n \geq 2} \vert \Omega_{n} \vert$ due to Lemma~\ref{lemma:commuting.translations.monoids} and the second part of Example~\ref{example:associative.algebras}. Furthermore, letting $s \defeq \vert \Omega_{1} \vert$ and considering any bijective map $\nu \colon \{ 1,\ldots,s \} \to \Omega_{1}$, we utilize Lemma~\ref{lemma:distributive.algebras} to see that \begin{displaymath}
	M(\mathbf{A}_{\Omega_{1}}) = \{ f_{1} \circ \ldots \circ f_{s} \mid f_{1} \in M(\mathbf{A}_{\nu (1)}), \ldots, f_{s} \in M(\mathbf{A}_{\nu (s)}) \} .
\end{displaymath} Lemma~\ref{lemma:commuting.translations.monoids} together with the first part of Example~\ref{example:associative.algebras} yields that $\mathbf{A}_{\Omega_{1}}$ is affinely bounded by $\sum_{\omega \in \Omega_{1}} (\vert M(\mathbf{A}_{\omega}) \vert - 1) = \sum_{\omega \in \Omega_{1}} \vert M(\mathbf{A}_{\omega}) \vert - s$. Now, our claim follows by Lemma~\ref{lemma:commuting.translations.monoids}. \end{proof}

As mentioned above, Proposition~\ref{proposition:associative.distributive.algebras} lays the frame for another interesting application of Theorem~\ref{theorem:bounded.profinite.algebras} and Corollary~\ref{corollary:dichotomy}, which constitutes our final result. 

\begin{cor}\label{corollary:profinite.choe.algebras} Let $(\Omega,{\preceq})$ be a Choe-signature. If $\mathbf{A} = (A,E)$ is an associative topological $\Omega$-algebra being distributive with respect to $\preceq$, then the following hold: \begin{enumerate}
	\item (\cite{choe}) $\mathbf{A}$ is profinite if and only if $A$ is a Stone space.
	\item If $\mathbf{A}$ compact and simple, then $A$ is connected or finite.
\end{enumerate} \end{cor}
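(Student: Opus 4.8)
The two assertions are direct consequences of results already in hand; the only real content is to verify that the hypotheses of Theorem~\ref{theorem:bounded.profinite.algebras} and Corollary~\ref{corollary:dichotomy} are met. The plan is to first observe that a Choe-signature $(\Omega,{\preceq})$ is in particular a signature with $\bigcup\{\Omega_n\mid n\in\mathbb{N}\}$ finite, hence a fortiori a compact (indeed finite discrete) continuous signature. Next, the hypothesis that $\mathbf{A}$ is associative and distributive with respect to $\preceq$ is exactly what Proposition~\ref{proposition:associative.distributive.algebras} requires, so that proposition tells us $\mathbf{A}$ is affinely bounded (by the explicit bound displayed there, though the precise value is irrelevant for the corollary). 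Thus $\mathbf{A}$ is an affinely bounded topological algebra over a compact signature, and both Theorem~\ref{theorem:bounded.profinite.algebras} and Corollary~\ref{corollary:dichotomy} apply verbatim.

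For statement (1), having established that $\mathbf{A}$ is affinely bounded over the compact signature $\Omega$, I would simply invoke Theorem~\ref{theorem:bounded.profinite.algebras}: under these hypotheses $\mathbf{A}$ is profinite if and only if its carrier space $A$ is a Stone space. This recovers the cited result of Choe~\cite{choe}. For statement (2), assuming in addition that $\mathbf{A}$ is compact, Hausdorff (which is subsumed in the standing setup, since being a Stone space or being simple in the relevant sense presupposes Hausdorffness — and in any case one reads "compact and simple" as "compact Hausdorff and simple"), and simple, I would apply Corollary~\ref{corollary:dichotomy} directly to conclude that $A$ is connected or finite.

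There is essentially no obstacle here beyond bookkeeping: the one point to be careful about is that Proposition~\ref{proposition:associative.distributive.algebras} is stated for abstract $\Omega$-algebras, so one should note that the underlying $\Omega$-algebra of the topological $\Omega$-algebra $\mathbf{A}$ is associative and distributive with respect to $\preceq$ precisely because the topological algebra is, affine boundedness being a property of the underlying abstract algebra (as remarked in Section~\ref{section:polynomially.bounded.topological.algebras}, all Section~\ref{section:polynomial.boundedness} concepts are adopted for topological algebras by referring to the underlying algebra). With that observation recorded, the proof is a two-line application of the two cited results.

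\begin{proof} A Choe-signature $(\Omega,{\preceq})$ has $\bigcup\{\Omega_{n}\mid n\in\mathbb{N}\}$ finite, so $\Omega$, viewed as a continuous signature with each $\Omega_{n}$ discrete, is compact. By hypothesis the underlying $\Omega$-algebra of $\mathbf{A}$ is associative and distributive with respect to $\preceq$, so Proposition~\ref{proposition:associative.distributive.algebras} applies and shows that $\mathbf{A}$ is affinely bounded. Hence $\mathbf{A}$ is an affinely bounded topological algebra over a compact signature.

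For (1), Theorem~\ref{theorem:bounded.profinite.algebras} now yields that $\mathbf{A}$ is profinite if and only if $A$ is a Stone space. For (2), if moreover $\mathbf{A}$ is compact and simple, then Corollary~\ref{corollary:dichotomy} applies and gives that $A$ is connected or finite. \end{proof}
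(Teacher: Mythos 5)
Your proposal is correct and matches the paper's intended argument exactly: the paper treats this corollary as an immediate application of Proposition~\ref{proposition:associative.distributive.algebras} (to obtain affine boundedness) combined with Theorem~\ref{theorem:bounded.profinite.algebras} for (1) and Corollary~\ref{corollary:dichotomy} for (2), with the finiteness of the Choe-signature giving compactness. Your bookkeeping remarks about the discrete compact signature and the underlying abstract algebra are exactly the right (and only) points to check.
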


\section*{Acknowledgments}

F.~M.~Schneider is supported by funding of the Excellence Initiative by the German Federal and State Governments.  J.~Zumbr\"agel has been funded by the Irish Research Council under grant no.~ELEVATEPD/2013/82.

\printbibliography

\end{document}